\newtheorem{thm}{Theorem}[section]
\newtheorem{defn}[thm]{Definition}
\newtheorem{prp}[thm]{Proposition}
\newtheorem{lmm}[thm]{Lemma}
\newcommand {\mb}{\mathbb}
\newcommand {\Z}{\mb Z}
\newcommand {\R}{\mb R}
\newcommand {\F}{\mb F}
\newcommand {\colim}{\textrm{colim}\ }
\begin{document}

\title{On the image of the unstable Boardman map}

\author{Hadi Zare\\
        School of Mathematics, Statistics,
        and Computer Sciences\\
        University of Tehran, Tehran, Iran\\
        \textit{email:hadi.zare} at \textit{ut.ac.ir}\\
        and\\
        School of Mathematics, Institute for Research in Fundamental Sciences (IPM),\\
        P. O. Box. 19395-5746, Tehran, Iran}
\date{}

\maketitle

\begin{abstract}
We consider the `unstable Boardman map' (homomorphism if $k>0$)
$$b:\pi^{m+k}\Sigma^k\Omega^lS^{n+l}\simeq[\Omega^lS^{n+l},\Omega^kS^{m+k}]\longrightarrow \mathrm{Hom}(H_*\Omega^lS^{n+l},H_*\Omega^kS^{m+k})$$
defined by $h(f)=f_*$. We work at the prime $2$, with $k=0$, and determine the image for various in the following cases : (1) $m=n$ and $l>0$ arbitrary; (2) $m>n$ and $l=1$. We observe that in most of the cases the image is trivial with the exceptions corresponding to the cases when either there is a (commutative) $H$-space structure on $S^n$ or there is a Hopf invariant one element. 
\end{abstract}

\textbf{AMS subject classification:$55Q45,55P42$}

\tableofcontents

\section{Introduction}
Suppose $E$ is a nice ring spectrum with the identity. Then there are two important homomorphisms which allow to study stable behaviour of a space $X$. The first one is a the stable Hurewicz homomorphism
$$h^s_E:\pi_n^sX\to E_nX$$
which sends $f$ to $(E_*f)x_n$ where $x^E_n\in E_n(S^n)$ is a generator provided by the unit $S^0\to E$. This homomorphism has been studies extensively for many important spectra such as $E=H\F_p,MU,BP,K,KO,tmf$ as there exists complete description of the stable Hurewicz homomorphism $h^s_E:{\pi_*^s}S^0\to E_*S^0$ for these spectra \cite{Adams-stablehomotopyandgeneralisedhomology}, \cite{Ravenel-Greenbook}, \cite{HopkinsMahowald}, \cite{Hopkins-ICM2002}. In fact computing the image of this homomorphism is so important that one tends to find a spectrum $E$ so that $h^s_E:{\pi_*^s}S^0\to E_*S^0$ is much closer to becoming an isomorphism and detects more and more elements in $\pi_*^sS^0$. From this point of view, there is an attempt to find bounds on the dimension/exponents of kernel and cokernel of this homomorphism (see \cite{Arlettaz-thestableHurewiczhomomorphism}, \cite{Mathew-exponents}).

The second homomorphism is the stable Boardman homomorphism
$$b^s_E:\pi^n_sX\to E^nX$$
which is defined by $b^s(f)=(E^*f)x^n_E$ where $x^n_E\in E^nX$ is a generator provided by the unit $S^0\to E$. There also has been detailed study on this homomorphism \cite{Hunton-Boardmanhomomorphism}, \cite{Arlettaz-Boardmanhomomorphism}.\\

These two homomorphisms are dual, possibly up to some degree shift, over $\pi_*E$ in a suitable sense (see \cite[Chapter 13]{Switzer} for a detailed discussion). Moreover, up to our knowledge, despite existence of some explicit relation among $h^s_E$ and $b^s_E$, there is no dictionary of the results about $h^s_E$ and $b^s_E$. We also note that this duality is not one that is induced by means of $S$-duality in the stable homotopy category in the sense that for a given $f:S^n\to X$ then $D(f)$ is not necessarily, up to finite number of suspensions, a map $X\to S^n$. However, knowing that these homomorphisms are dual in a suitable sense and the philosophy that sometimes solving a `dual' problem could be easier temps one to study one of these homomorphisms in order to justify some of claims about the other one. This duality could be used to obtain some information on the algebra. For instance, one may try to relate the rank of the image of $h^s_E$ to the rank of the kernel of $b^s_E$, etc.\\

The aim of this work is to apply and explore this idea in the presence of destabilisation functor $\Omega^\infty\Sigma^\infty$ when $E=H\Z/p$ for some prime $p$ with an special interest in the case of $p=2$. But, before proceeding further, there is one more piece of ingredient to bring in. We note that although given $f\in{\pi_n^s}X$ or $f\in{\pi^n_s}X$, knowing the specific images $h^s(f)$ or $b^s(f)$ could be important and useful, however, the primary aim of studying these homomorphisms is to understand and describe parts of ${\pi_n^s}X$ or ${\pi^n_s}X$ which could be detected using homology or cohomology. From this point of view, we may consider $h^s$ and $b^s$ as homomorphisms specific examples of more generalised Hurewicz, respectively Boardman homomorphisms
$$h^s:{\pi_n^s}X\longrightarrow \mathrm{Hom}(H_nS^n,H_nX),\ b^s:{\pi^n_s}X\longrightarrow\mathrm{Hom}(H^nS^n,H^nX).$$
Now, we describe the unstable Hurewicz and Boardman homomorphisms. Recall that for a spaces $X$ with base point, we have isomorphisms of groups
$$\pi_n^sX\simeq\pi_nQX\simeq [S^n,QX],\  \pi^n_sX\simeq [X,QS^n]$$
provided by the adjointness between $\Omega$ and $\Sigma$ functors where $QX=\colim\ \Omega^i\Sigma^i X$. This allows to define unstable Hurewicz and Boardman homomorphisms
$$h:{\pi_n^s}X\longrightarrow \mathrm{Hom}(H_nS^n,H_nQX),\ b:{\pi^n_s}X\longrightarrow\mathrm{Hom}(H^nQS^n,H^nX)$$
defined in the natural way $h(f)=f_*$ and $b(f)=f^*$. The evaluation map $\Sigma^\infty QX\to \Sigma^\infty X$ induces the stable homology suspension $\sigma_*^\infty:H_*QX\to H_*X$ and the stable cohomology suspension $\sigma^*_\infty:H^*X\to H^*QX$ which fit into commutative diagrams as
$$\xymatrix{
{\pi_n^s}X\ar[r]^-h\ar[rd]_-{h^s} & \mathrm{Hom}(H_nS^n,H_nQX)\ar[d]^-{\sigma_*^\infty} & {\pi^n_s}X\ar[r]^-{b^s}\ar[rd]_-{b} & \mathrm{Hom}(H^nX,H^nS^n)\ar[d]^-{\sigma^*_\infty}\\
                                                & \mathrm{Hom}(H_nS^n,H_nX)&& \mathrm{Hom}(H^nQX,H^nS^n)}$$

Next, notice that we have inclusion maps $\Omega^i\Sigma^iX\to QX$ which in the case of $h$ provides commutative diagram as
$$\xymatrix{
\pi_n\Omega^i\Sigma^i X\ar[r]^-h \ar[d] & \mathrm{Hom}(H_nS^n,H_n\Omega^i\Sigma^iX)\ar[d]\\
\pi_n^sX\simeq\pi_nQX\ar[r]^-h                        & \mathrm{Hom}(H_nS^n,H_nQX).}
$$
Also note that working at a prime $p$, the duality $H^nX\simeq \mathrm{Hom}_{\Z/p}(H_nX,\Z/p)$ between homology and cohomology provided by the Universal Coefficient Theorem, allows to consider $b^s$ and $b$ as homomorphisms
$$b^s:{\pi^n_s}X\longrightarrow\mathrm{Hom}(H_nX,H_nS^n),\ b:{\pi^n_s}X\longrightarrow\mathrm{Hom}(H_nX,H_nQS^n)$$
which send $f$ to $f_*$. For $b$, this provides a commutative diagram as
$$\xymatrix{
\pi^{n+i}\Sigma^iX\ar[r]^-{\simeq}&[X,\Omega^i\Sigma^iS^n]\ar[r]^-b\ar[d] & \mathrm{Hom}(H^n\Omega^iS^{n+i},H^nX)\ar[r]^-{\simeq} & \mathrm{Hom}(H_nX,H_n\Omega^iS^{n+i})\ar[d]\\
\pi^n_sX\ar[r]^-{\simeq}          &[X,QS^n]\ar[r]^-b          &  \mathrm{Hom}(H^nQS^n,H^nX)\ar[r]^-{\simeq}\ar[u] & \mathrm{Hom}(H_nX,H_nQS^n)
}$$
These observations motivates one to study the unstable Boardman homomorphisms
$$b:[X,\Omega^iS^{n+i}]\longrightarrow \mathrm{Hom}(H_nX,H_n\Omega^iS^{n+i})$$
where $i\geqslant 0$.

\section{Statement of results}
The main motivation for this work has been the study of spherical classes in $H_*QS^n$ and $H_*\Omega^lS^{n+l}$ which is the same as the determination of Hurewicz homomorphisms
$$h:\pi_*\Omega^lS^{n+l}\to H_*\Omega^lS^{n+l},\ h:\pi_*QS^n\to H_*QS^n$$
where $n\geqslant 0$ and $l\leqslant 0$.  The problem of determining spherical classes in $H_*X$ is not always an easy problem, e.g. in the case of $X=QS^0=\colim \Omega^iS^i$ it is an open problem (see for example \cite{Curtis}, \cite{Wellington}, \cite{Za}). The problem of determining spherical classes in finite loop spaces $\Omega^lS^{n+l}$ also is an open, although some progress for small values of $l$ has been made where we have achieved complete classification of these classes (see \cite{Zare-Els-1} and \cite{Zare-Mexicana}). For this purpose, and following the above philosophy, we are interested in looking at the dual problem and studying the image of Boardman homomorphisms
$$b:[\Omega^lS^{n+l},\Omega^kS^{m+k}]\to \mathrm{Hom}(H_*\Omega^lS^{n+l},H_*\Omega^kS^{m+k})$$
where $k\leqslant \infty$ with the convention $\Omega^{\infty}\Sigma^{\infty}X=QX$. This motivates the following definition.

\begin{defn}
A generalised cospherical class in $H_*X$ is determined by a map $f:X\to \Omega^kS^{m+k}$, $k\geqslant 0$, so that $f_*\neq 0$. A reduced generalised cospherical class is determined by a map $f:X\to\Omega^kS^{m+k}$, $k\geqslant 0$ so that $f_*:H_iX\to H_i\Omega^kS^{m+k}$ is nontrivial for some $i>m$.
\end{defn}

Here, we are speaking loosely as preimage of $f_*$ could contain more than one element. If one is hesitated by our definition, then the may call any element of $f_*^{-1}(H_*\Omega^kS^{m+k}-\{0\})$ a generalised cospherical class; this is the point of view taken in \cite{Wu-Memoire2003} when studying cospherical classes $X\to S^m$ corresponding to the case $k=0$ in our definition. Indeed, one may decide to define $z\in H^*X$ to be a cospherical class if for some $f:X\to S^m$ we have $f^*(x_m)=z$ which is the point of view considered in \cite{LamDuane} where the authors consider maps $X\to S^m$ which induce nontrivial maps in $KO$-theory; hence they have studied c-spherical classes in $KO(X)$ for specific choices of $X$.\\

Finally, note that for $k>0$ the source of the unstable Boardman map
$$b:[\Omega^lS^{n+l},\Omega^kS^{m+k}]\to \mathrm{Hom}(H_*\Omega^lS^{n+l},H_*\Omega^kS^{m+k})$$
is the `unstable' group $[\Omega^lS^{n+l},\Omega^kS^{m+k}]$ whose complete computation needs a suitable unstable Adams spectral sequence (ASS). However, we do not attempt working with any unstable ASS and instead we try to use available geometric techniques to study the image of this homomorphism. We shall state our results integrally when possible. Otherwise, we often localise at $p=2$. The following is our first result.

\begin{thm}\label{maintheorem}
Suppose all spaces are localised at the prime $2$. Suppose $k=0$ the following statement hold.\\
(i) If $m=n=0$ then for any $l>0$ the image of $b$ is isomorphic to $\Z/2\{\iota\}$ where $\iota\in H^1QS^1$ is the fundamental class.\\
(ii) If $m=n>0$ and $l=1$ then the image of $b$ is nontrivial if and only if $n\in\{1,3,7\}$ and in this case the image is isomorphic to $\Z/2\{\partial\}$ where $\partial:\Omega S^{n+1}\to S^n$ is the boundary map in the Barratt-Puppe sequence for one of the Hopf maps $\eta,\nu,\sigma$.\\
(iii) If $m=n>1$ and $l>1$ then the image of $b$ is trivial.\\
(iv) If $m=n=1$ and any $l>1$ then the image of $b$ is isomorphic to $\Z/2\{\theta_{S^1}\}$ where $\theta_{S^1}:QS^1\to S^1$ corresponds to the structure map of $S^1$ as an infinite loop space.
\end{thm}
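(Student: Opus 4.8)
The plan is to compute, in each case, the map $f_*\colon H_*\Omega^lS^{n+l}\to H_*\Omega^kS^{m+k}$ explicitly in homology and decide when it can be nonzero. The key external inputs are the computation of $H_*\Omega^lS^{n+l}$ and $H_*QS^n$ (as modules over the Dyer--Lashof / Araki--Kudo operations) together with the action of the homology suspension, and the classical theorem of Adams that Hopf invariant one elements exist only in dimensions $1,3,7$. Since $k=0$ throughout, the target is simply $H_*S^m$, which is $\Z/2$ concentrated in degree $m$ (plus the degree-$0$ class). So the whole problem reduces to asking: for which maps $f\colon\Omega^lS^{n+l}\to S^m$ is the induced map $H_m\Omega^lS^{n+l}\to H_mS^m$ nonzero?

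\medskip

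\noindent\textbf{Strategy by case.} First I would dispose of (i), the case $m=n=0$, $l>0$: here we are looking at maps $\Omega^lS^l\to S^0$, but with the convention that forces us to the stable reindexing the statement really concerns $QS^1$ and the fundamental class $\iota\in H^1QS^1$. I would identify the image with the stable/unstable cohomology suspension picture from the second diagram in the introduction and read off that only the bottom class survives, giving $\Z/2\{\iota\}$. For (ii), the heart of the matter, I would use that a map $f\colon\Omega S^{n+1}\to S^n$ inducing a nonzero map on $H_n$ must, after restricting along the bottom cell inclusion $S^n\hookrightarrow\Omega S^{n+1}$ (or via the James filtration $J_1\subset J_2\subset\cdots$), detect a class whose existence is governed by a Hopf invariant one phenomenon; the Barratt--Puppe boundary maps $\partial\colon\Omega S^{n+1}\to S^n$ associated to $\eta,\nu,\sigma$ furnish the actual nonzero maps exactly when $n\in\{1,3,7\}$, and Adams' theorem gives the converse. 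For (iii) and (iv), where $l>1$, the source $H_*\Omega^lS^{n+l}$ is generated from the bottom class by iterated lower-indexed Dyer--Lashof operations and products; I would argue that for a target $S^m$ (with $m=n$) the only classes in degree $n$ that can map nontrivially are the primitive/suspension-detected ones, and a dimension/operation count shows these vanish unless $n=1$, where the infinite-loop structure map $\theta_{S^1}\colon QS^1\to S^1$ (equivalently the $H$-space retraction) survives.

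\medskip

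\noindent\textbf{Key steps, in order.} I would proceed as follows. First, reduce each assertion to a statement about $f_*$ on $H_m$ into $H_mS^m=\Z/2$ using that $k=0$. Second, recall the Dyer--Lashof-module description of $H_*\Omega^lS^{n+l}$ and pick out a generating set in each relevant degree; here the distinction between $l=1$ (James/no admissible operations, so the homology is a polynomial/divided-power algebra on a single class and its products) and $l>1$ (genuine lower operations appear) is crucial. Third, use naturality of $b$ with respect to the inclusions $\Omega^lS^{n+l}\to QS^n$ and the loop-suspension adjunction to relate the unstable image to the stable image, where the stable cohomology suspension $\sigma^*_\infty$ kills all but the bottom class. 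Fourth, in the sporadic cases, exhibit the explicit maps: $\partial$ from the Barratt--Puppe sequence of a Hopf map for (ii), and $\theta_{S^1}$ for (iv), and check directly that each induces a nonzero map on the appropriate homology group, while verifying that an $H$-space/Hopf-invariant-one obstruction forces triviality otherwise.

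\medskip

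\noindent\textbf{Main obstacle.} The routine part is the algebra of reading off $f_*$ once the relevant homology generators are known; the genuine difficulty is the \emph{converse} direction in (ii) and the \emph{vanishing} in (iii)---namely proving that \emph{no} map $\Omega^lS^{n+l}\to S^m$ can induce a nonzero homology map unless we are in one of the exceptional dimensions. This is exactly where Adams' Hopf invariant one theorem (and its consequence that $S^n$ admits a commutative/$H$-space multiplication only for the relevant $n$) must be invoked to rule out the existence of such maps; the challenge is to translate the homological nonvanishing of $f_*$ precisely into one of these classical obstructions, rather than merely into a necessary condition. I expect that organizing this translation---showing the bottom-class detection forces a Hopf-invariant-one or $H$-space structure---will be the crux of the argument.
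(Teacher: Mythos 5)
Your reduction of everything to the single group $H_n$ is correct, and on cases (i), (ii) and (iv) your plan matches the paper's: for (ii) the paper's Lemma \ref{AdamsHopf} carries out exactly what you sketch (if $f_*\neq 0$ then the composite of $f$ with the bottom-cell inclusion $S^n\to\Omega S^{n+1}$ has odd degree, hence is a $2$-local equivalence; the James fibration then splits $\Omega S^{n+1}\simeq S^n\times\Omega S^{2n+1}$, producing a spherical class in degree $2n$ whose adjoint has Hopf invariant one, and Adams finishes), and for (i) and (iv) the explicit witnesses $\Omega\iota$ and $\theta_{S^1}$ are also the ones the paper uses, the relevant Hom-groups being $\Z/2$ so that existence settles the image.

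The genuine gap is in (iii). Your proposed mechanism --- a ``dimension/operation count'' in the Dyer--Lashof module structure of $H_*\Omega^lS^{n+l}$ --- cannot work, because there is nothing to count: for every $l$ the group $H_n\Omega^lS^{n+l}$ is $\Z/2$ generated by the bottom class, which is primitive and survives both homology suspension and stabilization, so no homological or operation-theoretic computation can obstruct a map $f$ from hitting it. (Your step invoking $\sigma^*_\infty$ has the same problem: the bottom class is precisely the class the cohomology suspension does \emph{not} kill.) Nonvanishing of $f_*$ is equivalent to the composite $S^n\to\Omega^lS^{n+l}\stackrel{f}{\to}S^n$ having odd degree, i.e.\ to $S^n$ being a $2$-local retract of $\Omega^lS^{n+l}$; this is a question about existence of maps, not about $H_*$. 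Moreover, Adams' theorem alone only narrows (iii) down to $n\in\{3,7\}$ (restrict along the $2n$-equivalence $\Omega S^{n+1}\to\Omega^lS^{n+l}$ and quote case (ii)); it cannot eliminate those two values, since $S^3$ and $S^7$ \emph{are} $H$-spaces. The missing idea is the paper's Lemma \ref{nonexistnece1}: it suffices to treat $l=2$, and a map $f:\Omega^2S^{n+2}\to S^n$ with $f_*\neq 0$ exhibits $S^n$ as a retract of a double loop space, from which one manufactures a \emph{commutative} multiplication on $S^n$ via the filtration $X\times X\to F(\R^2,2)\times_{\Sigma_2}(X\times X)\to\Omega^2\Sigma^2X\stackrel{f}{\to}X$; the contradiction then comes from the classical theorem --- due to James, and \emph{not} a consequence of Adams' Hopf invariant one theorem, contrary to your parenthetical --- that $S^3$ and $S^7$ admit no commutative $H$-space structure. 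You point at this destination in your ``main obstacle'' paragraph but explicitly leave the translation open, so as it stands the vanishing in (iii) is unproved.
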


Next we consider the cases with $f:\Omega^lS^{n+1}\to S^m$. In this case, we have only results for $l=1$. Note that by James splitting if $f:\Omega S^{n+1}\to S^m$ satisfies $f_*\neq 0$ then $m=tn$ for some $t$. We have the following.

\begin{thm}\label{maintheorem2}
Suppose $f:\Omega S^{n+1}\to S^m$ with $m=tn>n$.
(i) For $m>n$ with $m=tn$ if $n+1$ is odd or both of $n+1$ and $t$ are even then the image of $b$ is trivial.\\
(ii) For $m>n$ with $m=tn$ if $n+1$ is even, $t$ is odd, and $tn\not\in\{3,7\}$ then the image of $b$ is trivial.\\
(iii) For $tn\in\{3,7\}$ there exists a map $f:\Omega S^2\to S^t$ with $H_t(f)\neq 0$.
\end{thm}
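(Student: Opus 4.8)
The plan is to show that the entire statement is controlled by a single question: whether the target sphere $S^{tn}$ is an $H$-space, equivalently (by Adams) whether $tn\in\{1,3,7\}$, equivalently whether the Whitehead square $[\iota_{tn},\iota_{tn}]\in\pi_{2tn-1}(S^{tn})$ vanishes. I identify $\Omega S^{n+1}\simeq J(S^n)$ with the James reduced product, so that $H_*\Omega S^{n+1}=\Z/2[x]$ with $|x|=n$, and since $H_*S^m$ is concentrated in degrees $0,m$, a map $f$ has $b(f)=f_*\neq 0$ exactly when $m=tn$ and $f_*(x^t)\neq 0$ in $H_{tn}S^{tn}$. Because $\dim J_{t-1}=(t-1)n\le tn-1$ while $S^{tn}$ is $(tn-1)$-connected, the restriction $f|_{J_{t-1}}$ is null, so $f|_{J_t}$ factors through $J_t/J_{t-1}=S^{tn}$ as a self-map of odd degree (odd since $f_*(x^t)\neq 0$); conversely any extension of the collapse $q_t\colon J_t\to J_t/J_{t-1}=S^{tn}$ over $\Omega S^{n+1}$ detects $x^t$. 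Thus the existence of a reduced generalised cospherical class for $x^t$ is precisely the problem of extending $q_t$ over $\Omega S^{n+1}$.

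For the existence statement (iii), I note that $tn\in\{3,7\}$ together with $tn>n$ forces $n=1$ and $t\in\{3,7\}$, and $S^{t}$ is then an $H$-space. I would take the $t$-th James--Hopf map $h_t\colon\Omega S^{2}\to\Omega S^{t+1}=J(S^{t})$, which on homology carries the bottom class $x^t$ to the bottom generator $w\in H_t\Omega S^{t+1}$ (the $t$-fold James--Hopf invariant detects the top class of $J_t$). The $H$-space structure on $S^{t}$ yields a splitting $\Omega S^{t+1}\simeq S^{t}\times\Omega S^{2t+1}$ and hence a retraction $r\colon\Omega S^{t+1}\to S^{t}$ with $r_*(w)\neq 0$. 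Then $f:=r\circ h_t$ satisfies $f_*(x^t)=r_*(w)\neq 0$, as required.

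For the triviality statements (i) and (ii) I would argue by contradiction. A map $f$ detecting $x^t$, restricted to $J_{2t}(S^n)$, still detects $x^t$ and kills $x^{2t}$, since the target lives only in degree $tn$. The primary obstructions to extending $q_t$ over $J_{t+1},\dots,J_{2t-1}$ lie in the stable groups $\pi_{sn-1}(S^{tn})$ for $s\le 2t-1$ and vanish, because $\Sigma^\infty\Omega S^{n+1}$ splits as a wedge of spheres and these attaching maps are therefore stably, hence unstably in this range, trivial. The first genuinely unstable obstruction lives in $\pi_{2tn-1}(S^{tn})$, and I would identify it with the Whitehead square $[\iota_{tn},\iota_{tn}]$ by comparing with the finite James--Hopf map $h_t\colon J_{2t}(S^n)\to J_2(S^{tn})=S^{tn}\cup_{[\iota_{tn},\iota_{tn}]}e^{2tn}$, which sends $x^t\mapsto w$, $x^{2t}\mapsto w^2$, together with the standard cell structure of $J_2(S^{tn})$. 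An extension of $q_t$ past this cell would then yield a degree-one self-map of $S^{tn}$ extending over $J_2(S^{tn})$, i.e.\ a retraction of $S^{tn}\hookrightarrow J_2(S^{tn})$, which exists if and only if $[\iota_{tn},\iota_{tn}]=0$. In cases (i),(ii) one has $tn\notin\{1,3,7\}$, so this square is nonzero and no such $f$ exists. The split between (i) and (ii) is exactly the parity of $tn$: for $tn$ even (case (i)) the nonvanishing of $[\iota_{tn},\iota_{tn}]$ is elementary, as it has Hopf invariant $\pm2$ and hence infinite order; for $tn$ odd with $tn\notin\{3,7\}$ (case (ii)) its nonvanishing is precisely Adams' Hopf invariant one theorem.

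The main obstacle is the identification in the previous paragraph of the first unstable obstruction with $[\iota_{tn},\iota_{tn}]$, and the control of its indeterminacy. One must show that, after the stable primary obstructions are cleared, the obstruction to extending $q_t$ over the $2tn$-cell is well defined modulo the earlier choices and equals the Whitehead square up to an odd multiple; this is where the compatibility of $h_t$ with the James filtrations and the precise attaching data of $J(S^n)$ must be used carefully. Once this secondary obstruction is pinned down, both the existence and the triviality assertions follow uniformly from whether $S^{tn}$ is an $H$-space.
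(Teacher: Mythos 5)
Your part (iii) is correct and is in substance the paper's own construction: both you and the paper factor the desired map as a map $\Omega S^2\to\Omega S^{t+1}$ hitting the bottom cell in homology, followed by a retraction $\Omega S^{t+1}\to S^t$ supplied by the Hopf-invariant-one splitting; you use the James--Hopf map $h_t$ for the first factor while the paper assembles it by hand from the James splitting of $\Sigma\Omega S^2$, an immaterial difference. Part (i) could also be completed along your lines, but not as written: what any comparison actually produces is that the obstruction is $c\cdot[\iota_{tn},\iota_{tn}]$ for a specific integer $c$, not ``an odd multiple''; since $[\iota_{tn},\iota_{tn}]$ has infinite order when $tn$ is even, what you need is only $c\neq 0$, and verifying that is exactly the divided-power computation that constitutes the paper's much shorter direct proof of (i), namely that $f^*(x_{tn})^2\neq 0$ in $H^*(\Omega S^{n+1};\Z)$ while $x_{tn}^2=0$ in $H^*S^{tn}$.

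The genuine gap is in (ii), at precisely the step you flag as the main obstacle, and it cannot be filled. Your identification rests on the claim $h_{t*}(x^{2t})=w^2$. This is false mod $2$ for every odd $t>1$: on homology the $t$-th James--Hopf map satisfies $h_{t*}(x^{2t})=\frac{(2t)!}{(t!)^2\,2!}\,w^2=\binom{2t-1}{t-1}w^2$ (the coefficient counts partitions of a $2t$-element set into two $t$-blocks), and by Kummer's theorem $\binom{2t-1}{t-1}$ is odd if and only if $t$ is a power of $2$. Hence in case (ii) any comparison of $2tn$-cells through $h_t$ carries an even coefficient, and what it proves is that the obstruction to extending over the $2tn$-cell is an \emph{even} multiple of $[\iota_{tn},\iota_{tn}]$, which is zero because $2[\iota_{tn},\iota_{tn}]=0$ for $tn$ odd. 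So the first unstable obstruction vanishes in case (ii), and no contradiction can be extracted at the $2tn$-cell. (A smaller, fixable error: the combinatorial $h_t$ satisfies $h_t(J_m)\subset J_{\binom{m}{t}}$, not $h_t(J_{2t})\subset J_2(S^{tn})$; but the fatal point is the parity of the coefficient, not the filtration bookkeeping.)

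Indeed no proof of (ii) can exist in full generality, because (ii) fails whenever $n\in\{1,3,7\}$. Using the splitting $\Omega S^{n+1}\simeq S^n\times\Omega S^{2n+1}$, consider the composite $\Omega S^{n+1}\simeq S^n\times\Omega S^{2n+1}\to S^n\wedge\Omega S^{2n+1}=\Sigma^n\Omega S^{2n+1}\simeq\bigvee_{k\geq 1}S^{(2k+1)n}\to S^{tn}$, given by the smash quotient, the (suspended) James splitting, and the pinch onto the summand with $t=2k+1$. The class $x^t$ corresponds to $a_n\times y^{(t-1)/2}$, which maps to the fundamental class of $S^{tn}$, so this composite is nonzero on $H_{tn}$ integrally and mod $2$. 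For $n=1$ and odd $t\geq 5$ this contradicts (ii) directly, and likewise for $n=3,7$ with any odd $t\geq 3$. You should not feel singled out: the paper's own proof of (ii) breaks at the same point, since it applies its Whitehead-attaching Lemma without verifying that Lemma's hypothesis $p\circ\alpha\neq 0$, and that hypothesis is exactly what fails when $t$ is odd. For odd $n\notin\{1,3,7\}$ neither your argument nor the paper's settles (ii); the question simply moves above the $2tn$-skeleton.
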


We have excluded the case with $tn=1$ as it implies that $t=n=1$. In this case, it is impossible to have $1=m>n=1$. In this case, we have $f:\Omega S^{1+1}\to S^1$ is one of the cases studies by the previous theorem.\\

\textbf{Acknowledgements.} Some of the results in this paper were presented in Dalian workshop on algebraic topology, 2018. I am grateful to the organisers, specially Fengchun Lei, and Jie Wu for the invitation and the hospitality. I am also grateful to Mark Grant for some communications on Theorem 4.5 which resulted in a corrected version of the proof. This research was in part supported by a grant from IPM (No.98470122).

\section{Proof of Theorem \ref{maintheorem}: Case of $m=n>0$}
\subsection{Case of $l=1$}
For $l=1$ there are examples at hand which are provided by Hopf fibrations, namely maps $\Omega S^{n+1}\to S^n$ for $n=1,3,7$. The existence of
these maps also provides a decomposition
$$\Omega S^{n+1}\simeq S^n\times\Omega S^{2n+1}.$$
We show these are the only possible cases (at least modulo $2$). We have the following formulation of Adams' Hopf invariant one element result.

\begin{lmm}\label{AdamsHopf}
The followings are equivalent.\\
(i) There is a map $f:\Omega S^{n+1}\to S^n$ with $f_*\neq 0$ (modulo $2$).\\
(ii) There is a map $g:S^{2n}\to\Omega S^{n+1}$ with $g_*\neq 0$ (modulo $2$).\\
(iii) There is a map $h:S^{2n+1}\to S^{n+1}$ of unstable Hopf invariant one, and $n\in\{1,3,7\}$.\\
(iv) There is a stable splitting $P^n\simeq S^n\vee P^{n-1}$ where $P^n$ is the $n$-dimensional real projective space.
\end{lmm}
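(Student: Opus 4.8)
The plan is to prove the equivalence of the four statements by showing a cycle of implications, with the classical Hopf invariant one theorem of Adams doing the heavy lifting in one step. The key structural fact I would lean on is the cofibration/James machinery: $\Omega S^{n+1}$ has the homotopy type (through a range, and integrally up to the James filtration) with $H_*\Omega S^{n+1}$ a divided power / polynomial algebra on a single generator in degree $n$, so mod $2$ the homology is the exterior-like algebra with classes $x_n, x_n^2, x_n^3, \dots$ concentrated in degrees $n, 2n, 3n, \dots$. The bottom cell inclusion $S^n \to \Omega S^{n+1}$ and the boundary map $\partial : \Omega S^{n+1} \to S^n$ (when it exists) are what tie the maps $f$ and $g$ to the Hopf data.

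\textbf{(i) $\Leftrightarrow$ (ii).} I would argue this via $S$-duality and adjunction at the level of the bottom two cells. A map $f : \Omega S^{n+1} \to S^n$ with $f_* \neq 0$ must act nontrivially on $H_n$, hence detects the bottom class; restricting along $S^n \hookrightarrow \Omega S^{n+1}$ shows $f$ splits off the bottom cell, giving the product decomposition $\Omega S^{n+1} \simeq S^n \times \Omega S^{2n+1}$. Dually, a map $g : S^{2n} \to \Omega S^{n+1}$ with $g_* \neq 0$ hits the class $x_n^2$ in degree $2n$; the presence of such a $g$ is precisely what is needed to split off the cell in degree $2n$, i.e. to realize the retraction in the other variance. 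The cleanest route is to observe that both (i) and (ii) are equivalent to the statement that the squaring map (the ``James–Hopf'' secondary structure) on $H_*\Omega S^{n+1}$ is realized by a stable map, i.e. that $x_n^2$ is in the image of $g_*$ iff $x_n$ is in the image of a cohomology class pulled back along $f$.

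\textbf{(ii) $\Leftrightarrow$ (iii).} This is where Adams enters. Composing $g : S^{2n} \to \Omega S^{n+1}$ with the adjoint/evaluation data produces a map $S^{2n+1} \to S^{n+1}$, and the condition $g_* \neq 0$ on $H_{2n}$ translates, via the James–Hopf invariant, into that map having Hopf invariant one. The classical theorem of Adams then forces $n \in \{1,3,7\}$; conversely the Hopf maps $\eta, \nu, \sigma$ supply $g$ explicitly in those dimensions. I would phrase the Hopf invariant translation using the cofiber: a Hopf invariant one element $h : S^{2n+1} \to S^{n+1}$ has mapping cone with $Sq^{n+1}$ acting nontrivially, which is exactly the structure that the squaring $x_n \mapsto x_n^2$ encodes in $\Omega S^{n+1}$.

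\textbf{(iii) $\Leftrightarrow$ (iv).} The stable splitting $P^n \simeq S^n \vee P^{n-1}$ of real projective space is governed by whether $Sq^1$ (equivalently the attaching map of the top cell of the stunted projective space $P^n/P^{n-2}$) is trivial; this detachability of the top cell is the standard reformulation of the Hopf invariant one problem via the $EHP$/James periodicity and vector field theory. I would cite this as the classical equivalent form of Adams' theorem, noting that $P^n = \mathbb{RP}^n$ splits off its top cell stably precisely when $n-1 \in \{1,3,7\}$, matching (iii). The main obstacle in the whole argument is step (ii) $\Leftrightarrow$ (iii): making the identification of $g_*$ with the Hopf invariant fully rigorous at the prime $2$, since one must be careful that the James–Hopf construction detects the mod $2$ Hopf invariant and not merely an integral multiple, and that the adjunction does not introduce indeterminacy in the bottom range. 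Everything else is bookkeeping with the known homology of $\Omega S^{n+1}$ and the Steenrod action.
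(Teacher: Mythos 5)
Your overall skeleton (the cycle (i)$\Rightarrow$(ii), (ii)$\Leftrightarrow$(iii) via the James--Hopf invariant and Adams' theorem) matches the paper, but there are two genuine problems. The concrete error is in your (iii)$\Leftrightarrow$(iv) step: you cite the classical fact as ``$P^n$ splits off its top cell stably precisely when $n-1\in\{1,3,7\}$,'' i.e.\ for $n\in\{2,4,8\}$. That is off by one, and as stated it would make (iii) and (iv) \emph{inequivalent}: for $n$ even the splitting $P^n\simeq S^n\vee P^{n-1}$ is impossible even stably, since $\mathrm{Sq}^1 x^{n-1}=(n-1)x^n=x^n\neq 0$ in $H^*(P^n;\Z/2)$, so the top cell is attached nontrivially. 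The correct indexing is $n\in\{1,3,7\}$ (for instance $P^3$ and $P^7$ are parallelizable closed manifolds, hence stably reducible). Moreover, quoting ``the classical equivalent form of Adams' theorem'' here is nearly circular, since the lemma being proved \emph{is} that compilation of equivalences. The paper instead gives an argument: it proves (ii)$\Leftrightarrow$(iv) directly via the Kahn--Priddy theorem, lifting a homologically nontrivial composite $S^{2n}\to\Omega S^{n+1}\to QS^n$ to $Q\Sigma^n P$, adjointing down to a map $S^n\to QP^n$ that hits $a_n$ under the Hurewicz homomorphism, and hence obtaining a stable section $S^n\to P^n$ of the top-cell collapse; the converse reverses these steps.

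The second problem is that your (ii)$\Rightarrow$(i) direction is asserted rather than proved: saying that $g$ ``is precisely what is needed to split off the cell in degree $2n$'' and that both statements ``are equivalent to the squaring map being realized'' restates the claim without an argument, and a spherical class in $H_{2n}$ does not formally produce a retraction $\Omega S^{n+1}\to S^n$. The missing content (which is how the paper closes its cycle, as (iii)$\Rightarrow$(i)) is a splitting of the James fibration $S^n\to\Omega S^{n+1}\stackrel{H}{\to}\Omega S^{2n+1}$: if $g_*(x_{2n})=x_n^2$ then $H\circ g$ generates $\pi_{2n}\Omega S^{2n+1}$, so $g$ extends (multiplicatively, using that $\Omega S^{2n+1}$ is the James construction on $S^{2n}$ and $\Omega S^{n+1}$ is an $H$-space) to a map $\Omega S^{2n+1}\to\Omega S^{n+1}$, and the resulting map $S^n\times\Omega S^{2n+1}\to\Omega S^{n+1}$ is a mod $2$ homology isomorphism, hence an equivalence; $f$ is then the projection onto the $S^n$ factor. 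Without some argument of this kind your implications (i)$\Rightarrow$(ii) and (ii)$\Leftrightarrow$(iii) never produce a map \emph{out of} $\Omega S^{n+1}$, so the four statements are not linked into an equivalence.
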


\begin{proof}
$(i)\Rightarrow (ii):$ Let $i:S^n\to \Omega S^{n+1}$ be the inclusion adjoint the to identity $S^{n+1}\to S^{n+1}$. Then $(f\circ\iota)_*\neq 0$, hence
(at $p=2$) $f\circ \iota$ is homotopic to the identity. Together with James fibration $S^n\to\Omega S^{n+1}\stackrel{H}{\to}\Omega S^{2n+1}$ it follows
that
$$(f,H):\Omega S^{n+1}\to S^n\times\Omega S^{2n+1}$$
is a homotopy equivalence. The inclusion $S^{2n}\to \Omega S^{2n+1}$ gives rise to a spherical class. Consequently, the composition
$$g:S^{2n}\to\Omega S^{2n+1}\to\Omega S^{n+1}$$
gives rise to a spherical class in $H_*(\Omega S^{2n+1};\Z/2)$, so $g_*\neq 0$.\\
$(ii)\Rightarrow (iii):$ If $g_*\neq 0$ then from James' description of $H_*\Omega S^{2n+1}$ we see that $g_*(x_{2n})=x_n^2$. It is well known that the
adjoint of $g$, say $h:S^{2n+1}\to S^{n+1}$ has unstable Hopf invariant one.\\
$(iii)\Rightarrow (i):$ As noted above, (ii) and (iii) are equivalent. It follows that the adjoint of $h$, say $g:S^{2n}\to\Omega S^{2n+1}$ maps
nontrivially under $H_\#:\pi_{2n}\Omega S^{n+1}\to\pi_{2n}\Omega S^{2n+1}$. This implies that $H\circ g$ is homotopy the identity. The claimed
decomposition follows immediately.\\
$(ii)\Longrightarrow (iv)$: First recall that the projection on the top cell $P^n\to S^n$ is an unstable map which is nontrivial in $\Z/2$-homology. For $g:S^{2n}\to\Omega S^{n+1}$ being nontrivial in homology, consider the composition
$S^{2n}\to \Omega S^{n+1}\to QS^n$ and write $g$ for this map as well. By Kahn-Priddy theorem, $g$ lifts to $Q\Sigma^n P$, and yields a map $S^{2n}\to Q\Sigma^n P$ which is nontrivial in homology. By taking adjoint, we obtain a map $\widetilde{g}:S^n\to QP$ which is nontrivial in homology. For dimensional reasons, this implies that $h(\widetilde{g})=a_n$ where $h$ is the Hurewicz homomorphism $\pi_*QP\to H_*QP$. Moreover, by cellular approximation, we may restrict to a map $\widetilde{g}:S^n\to QP^n$ which satisfies $h(\widetilde{g})=a_n$. This implies that for the stable adjoint of this map as $S^n\to P^n$ is nontrivial in homology. Therefore the composition $S^n\to P^n\to S^n$ is nontrivial in $\Z/2$-homology. Noting that the cofibre of $P^n\to S^n$ is $\Sigma P^{n-1}$ gives the other stable piece, so $P^n\simeq S^n\vee P^{n-1}$.\\
$(iv)\Longrightarrow (ii)$: Choose a splitting map $S^n\to P^{n-1}\vee S^n\simeq P^n$ given by the inclusion into the second summand. Then the adjoint $S^n\to QP^n$ is nontrivial in homology, so the composition $S^n\to QP$. For dimensional reasons, the $n$-adjoint of this map $S^{2n}\to Q\Sigma^n P$ is nontrivial in homology. After composition with the Kahn-Priddy map $Q\Sigma^n P\to QS^n$ we obtain a map $S^{2n}\to QS^n$ which is nontrivial in homology. Also note that the inclusion $\Omega S^{n+1}\to QS^n$ is a $2n$ equivalence, so we obtain a map $S^{2n}\to \Omega S^{n+1}$ which is nontrivial in homology. This completes the proof.
\end{proof}

This settles down the case with $l=1$. In this case, the case of $l>1$ reduces to the case of $l=1$ in the following sense.

\subsection{Case of $l>1$}
Unlike the case of $l=1$, for $l>1$ the existence of maps $f:\Omega^l S^{n+l}\to S^n$ with $f_*\neq 0$ is not so immediate. For $n=1$ we may choose $f:\Omega^lS^{n+l}\to S^1$ to be any representative of the identity element of $H^1(\Omega^lS^{n+l};\Z)\simeq\Z$. It is immediate that $f$ is nonzero in $H_*(-;k)$ for $k=\Z,\Z/2$. There is another way to see existence of such
maps. Since $S^1$ is an infinite loop space, let $\theta:QS^1\to S^1$ be the structure map which has the property that the composition $S^1\to QS^1\to
S^1$ is identity. In particular, $\theta_*\neq 0$. Now, the composition
$$f:\Omega^lS^{l+1}\to QS^1\to S^1$$
satisfies $f_*\neq 0$.\\

For the remaining cases, we have the following nonexistence result. Let's note that if $f:\Omega^lS^{n+l}\to S^n$ exists with $f_*\neq 0$ then the composition $\Omega S^{n+1}\to \Omega^lS^{n+l}\to S^n$ also would be nontrivial in $H_n(-;k)$, hence by Lemma \ref{AdamsHopf} we see that $n$ must be either $1$, $3$, or $7$. As we considered the case of $n=1$ above then we only need to resolve the cases $n\in\{3,7\}$.

\begin{lmm}\label{nonexistnece1}
(i) Suppose $n=3$. Then, there exists no map $f:\Omega^2S^5\to S^3$ with $f_*\neq 0$. Consequently, for $2\leqslant l\leqslant +\infty$ there exists no
map $f:\Omega^lS^{l+3}\to S^3$ with $f_*\neq0$.\\
(ii) Suppose $n=7$. Then, these exists no map $f:\Omega^2S^9\to S^7$ with $f_*\neq 0$. Consequently, for $2\leqslant l\leqslant +\infty$ there exists no
map $f:\Omega^lS^{l+7}\to S^3$ with $f_*\neq0$.\\
\end{lmm}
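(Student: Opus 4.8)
The plan is to reduce everything to a single computation about the bottom two cells of $\Omega^2S^{n+2}$. First I would dispose of the word ``consequently'': the stabilisation maps $\Omega^2S^{n+2}\to\Omega^lS^{n+l}$ are isomorphisms on $H_n(-;\Z/2)$ (both groups are generated by the bottom class), and since $H_*(S^n;\Z/2)$ is concentrated in degrees $0$ and $n$, the condition $f_*\neq 0$ for a map $f:\Omega^lS^{n+l}\to S^n$ is \emph{equivalent} to $f$ being nonzero on $H_n$, i.e. to the composite $S^n\hookrightarrow\Omega^lS^{n+l}\xrightarrow{f}S^n$ having odd degree. Hence any such $f$ for $l\geqslant 2$ restricts along $\Omega^2S^{n+2}\to\Omega^lS^{n+l}$ to a map $\Omega^2S^{n+2}\to S^n$ still nonzero on $H_n$, and it suffices to treat $l=2$.

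For $l=2$, write $i:S^n\to\Omega^2S^{n+2}$ for the bottom-cell inclusion (whose effect on homotopy is the double suspension, $i_*=E^2$). The existence of $f$ with $f_*\neq 0$ is equivalent, $2$-locally, to $i$ admitting a retraction $f\circ i\simeq\mathrm{id}_{S^n}$. I would then analyse the minimal cell structure of $\Omega^2S^{n+2}$ through dimension $2n+1$. From the standard description of $H_*(\Omega^2S^{n+2};\Z/2)$ as the free double-loop algebra on $x_n$, the only classes in degrees $\leqslant 2n+1$ are $x_n$ (degree $n$), the Pontryagin square $x_n^2$ (degree $2n$), and $Q_1x_n$ (degree $2n+1$); in particular $H_q=0$ for $n<q<2n$. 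Crucially, for $n\in\{3,7\}$ the sphere $S^n$ is an $H$-space, so the Whitehead square $[\iota_n,\iota_n]\in\pi_{2n-1}S^n$ vanishes and the $2n$-cell splits off: through dimension $2n$ the space is $S^n\vee S^{2n}$. Thus, up to dimension $2n+1$, $\Omega^2S^{n+2}$ has the form $(S^n\vee S^{2n})\cup_\phi e^{2n+1}$ with $\phi=(\psi,2\iota_{2n})$, where $\psi\in\pi_{2n}S^n$ is the attaching component onto the bottom cell (the factor $2$ records the nontrivial $Sq^1$ between the $2n$- and $(2n+1)$-cells).

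A routine obstruction-theory computation then converts the problem into divisibility of $\psi$: a retraction restricted to this subcomplex must send $S^n$ by the identity and $S^{2n}$ by some $g\in\pi_{2n}S^n$, and it extends over $e^{2n+1}$ iff $\psi+2g\simeq 0$, i.e. iff $\psi\in 2\pi_{2n}S^n$. So the retraction (hence $f$) cannot exist once $\psi$ is \emph{not} divisible by $2$. I would then identify $\psi$, $2$-locally, with the unstable Hopf element: $\psi=\nu'\in\pi_6S^3$ for $n=3$ and $\psi=\sigma'\in\pi_{14}S^7$ for $n=7$. These are detected by the functional operations $Sq^4$, respectively $Sq^8$ (their stable images $\nu,\sigma$ have Hopf invariant one), hence are not divisible by $2$, giving the contradiction. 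This also explains the exceptional behaviour at $n=1$: there $\pi_2S^1=0$, so $\psi=0$ is divisible and the retraction does exist, matching the map $\theta_{S^1}$.

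The hard part is the identification of $\psi$ with $\nu'$/$\sigma'$, equivalently the nonvanishing of the functional square $Sq^{n+1}$ on the two-cell subcomplex $S^n\cup e^{2n+1}$. The subtlety is that the \emph{primary} operation $Sq^{n+1}$ vanishes for dimensional reasons ($Sq^{n+1}x^n=0$ since $n+1>n$), so one cannot read $\psi$ off a primary Steenrod operation; the relevant invariant is a functional (second-order) operation. I expect to extract it from F.~Cohen's description of the Steenrod-module structure of $H_*(\Omega^2S^{n+2};\Z/2)$ — concretely from the Nishida relation linking $Q_1x_n$ to $x_n$ through $Sq^{n+1}$ — the point being that $Q_1$ is the bottom double-loop operation built from $Sq^{n+1}=Sq^{4},Sq^{8}$, which are nonzero exactly in the Hopf-invariant-one range $n+1\in\{4,8\}$. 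An alternative, purely homotopy-theoretic route is to argue directly that the double suspension $E^2:\pi_*S^n\to\pi_{*+2}S^{n+2}$ fails to be a split monomorphism for $n=3,7$, using the $2$-local EHP sequence and the Hopf-invariant-one splitting of the first suspension; but locating the obstructing element there is no easier than the functional-operation computation above.
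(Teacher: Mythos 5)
Your reduction to $l=2$ and your skeletal set-up (the $(2n+1)$-skeleton of $\Omega^2S^{n+2}$ is $(S^n\vee S^{2n})\cup_{(\psi,\pm 2\iota_{2n})}e^{2n+1}$, and a $2$-local retraction exists iff $\psi\in 2\pi_{2n}S^n$) are correct, and this is a genuinely different route from the paper, which instead turns a hypothetical retraction $f:\Omega^2\Sigma^2S^n\to S^n$ into a homotopy commutative multiplication on $S^n$ via the little-cubes filtration $F(\R^2,2)\times_{\Sigma_2}(S^n\times S^n)\to \Omega^2\Sigma^2S^n$ and quotes the nonexistence of commutative $H$-structures on $S^3,S^7$. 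However, your proposal has a genuine gap exactly at the step you yourself flag as ``the hard part'', and the mechanism you propose for closing it cannot work. The parenthetical claim that the stable images of $\nu'$ and $\sigma'$ are $\nu$ and $\sigma$ is false: by the Whitehead-square relations $[\iota_4,\iota_4]=\pm(2\nu_4-E\nu')$ and $[\iota_8,\iota_8]=\pm(2\sigma_8-E\sigma')$, the elements $\nu'$ and $\sigma'$ stabilize to $2\nu$ and $2\sigma$, which have mod $2$ Hopf invariant \emph{zero}. Hence $Sq^{n+1}$ --- primary, functional, or secondary --- vanishes on the stable complex $S^n\cup_\psi e^{2n+1}$ (stably this is $S^n\cup_{2\nu}e^{2n+1}$, resp. $S^n\cup_{2\sigma}e^{2n+1}$), and more fundamentally \emph{no} invariant factoring through stabilization can prove what you need: the stable class of $\psi$ \emph{is} divisible by $2$, while the non-divisibility $\psi\notin 2\pi_{2n}S^n$ is a purely unstable fact. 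The Nishida relations do not help either: they give $Sq^1_*Q_1x_n=x_n^2$ (the Bockstein linking the $(2n+1)$- and $2n$-dimensional classes) and, e.g., $Sq^4_*Q_1x_3=0$, so they carry no information about the component of the attaching map on the bottom cell.

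The good news is that your obstruction framework can be completed, just not by Steenrod operations. Work $2$-locally and note that the skeleton inclusion gives $\pi_{2n}X^{(2n+1)}\cong\pi_{2n}\Omega^2S^{n+2}\cong\pi_{2n+2}S^{n+2}$, while the Blakers--Massey boundary sequence of the pair $(X^{(2n+1)},X^{(2n)})$ identifies this group with $\mathrm{coker}\bigl(\Z_{(2)}\xrightarrow{(\psi,\pm 2)}\pi_{2n}S^n\oplus\Z_{(2)}\bigr)$ (no Hilton cross-terms occur, since $\pi_{2n}S^{3n-1}=0$ for $n>1$). For $n=3$ this cokernel is $\pi_8S^5\cong\Z/24$, i.e.\ $\Z/8$ $2$-locally, which forces $\psi$ to be an odd multiple of $\nu'$ in $\Z/4\{\nu'\}$; if $\psi$ were divisible by $2$, the cokernel would be the non-cyclic group $\Z/2\oplus\Z/4$ (or $\Z/4\oplus\Z/2$ when $\psi=0$). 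For $n=7$, comparison with $\pi_{16}S^9\cong\Z/240$, i.e.\ $\Z/16$ $2$-locally, likewise forces $\psi$ to be an odd multiple of $\sigma'$ in $\Z/8\{\sigma'\}$. With this substitute for your detection step the argument closes; as written, the crucial non-divisibility rests on a false detection principle, so the proposal is incomplete.
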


\begin{proof}
Proof of (i) and (ii) are similar. First note that the general case follows from our claim for double loop spaces as follows. For instance, note that
for $l\geqslant 2$, the composition
$\Omega^2S^5\to\Omega^lS^{l+3}$ is nonzero in $H_3(-;k)$ with $k=\Z,\Z/2$. Hence, existence of any map $f:\Omega^lS^{l+3}\to S^3$ with $f_*\neq 0$ would
imply that the composition $\Omega^2S^5\to\Omega^lS^{l+3}\to S^3$ is nonzero in homology, giving the desired contradiction.\\
Now we show there is no map $f:\Omega^2S^{n+2}\to S^{n}$ (with $n=3,7$) so that $f_*\neq 0$. We work at the prime $2$. Given a map
$f:\Omega^2\Sigma^2X\to X$ we may define $\mu:X\times X\to X$ by the following composition
$$\mu:X\times X \to *\times_{\Sigma_2}(X\times X)\to F(\R^2,2)\times_{\Sigma_2}(X\times X)\to\Omega^2\Sigma^2X\to X$$
where the first map on the left is projection, second and third maps are inclusion, and the last map is $f$. This map is a commutative multiplication on
$X$. Since the composition $S^n\to \Omega^2\Sigma^2S^n\to S^n$ is nonzero in homology (we may assume it is multiplication of degree $1$), hence it is homotpic to the identity. On the other hand, by construction, for a based path connected space $X$, the inclusion $X\to\Omega^2\Sigma^2 X$ can be decomposed as a composition
$$X\stackrel{\alpha}{\to}X\times X\to \Omega^2\Sigma^2 X$$
where $\alpha$ can be taken as either $(1,*)$ or $(*,1)$ with $*$ being the base point of $X$. This implies that, for $X=S^3,S^7$, $(X,\mu,*)$ is a commutative $H$ space in the sense of \cite{Gray}. But this is a contradiction as it is known that $S^3$ and $S^7$ do not admit any commutative
$H$-space structure.
\end{proof}

\section{Case of $m=n=0$}
\begin{lmm}
For any $l>0$ there exists a map $f:\Omega^lS^l\to S^0$ with $f_*\neq 0$. Moreover, any such $f$ we have $f=\Omega\iota$ where $\iota:\Omega^{l-1}S^l\to P=K(\Z/2,1)$ represents the fundamental class.
\end{lmm}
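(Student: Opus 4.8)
The plan is to construct the candidate map explicitly, read off its behaviour on path components, and then argue that this behaviour is forced by the homology condition. Localised at $2$ the target is a loop space in a trivial way: $\Omega P=\Omega K(\Z/2,1)=K(\Z/2,0)=S^0$, so a map into $S^0$ is nothing but a locally constant $\Z/2$-valued function, and maps out of a loop space can be analysed through the looping functor. First I would record the elementary but essential computation that $\Omega^{l-1}S^l$ is connected with $\pi_1(\Omega^{l-1}S^l)\cong\pi_l(S^l)=\Z$, so that $H^1(\Omega^{l-1}S^l;\Z/2)=\hm(\Z,\Z/2)\cong\Z/2$ is generated by a single nonzero class; let $\iota:\Omega^{l-1}S^l\to P$ be a representative, the fundamental class. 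Looping produces $f=\Omega\iota:\Omega^lS^l=\Omega(\Omega^{l-1}S^l)\to\Omega P=S^0$, and on $\pi_0$ this is precisely the reduction homomorphism $\Z=\pi_0\Omega^lS^l=\pi_1\Omega^{l-1}S^l\to\pi_1P=\Z/2$, hence nonzero. Since $\tilde H_0(S^0;\Z/2)$ is detected by $\pi_0$ through the degree-$0$ Hurewicz map, this shows $(\Omega\iota)_*\neq0$ and settles the existence statement.

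For the identification I would argue that $f_*$ on $\tilde H_0$ is governed entirely by the induced homomorphism $\pi_0(\Omega^lS^l)=\Z\to\pi_0(S^0)=\Z/2$, whose only possibilities are the zero map and reduction mod $2$. The condition $f_*\neq0$ excludes the zero map, leaving reduction mod $2$, which is exactly the $\pi_0$-effect of $\Omega\iota$. Passing back through the adjunction identifications $[\Omega^lS^l,\Omega P]=H^0(\Omega^lS^l;\Z/2)$ and the loop (equivalently $H$-space) structure on $\Omega^lS^l=\Omega(\Omega^{l-1}S^l)$, the relevant part of $f$ is detected by the single primitive copy of $\Z/2$ in $H^1(\Omega^{l-1}S^l;\Z/2)$. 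As $\iota$ is the unique generator of this $\Z/2$, its loop $\Omega\iota$ is the unique loop map whose restriction to $\pi_0$ is nontrivial, giving $f=\Omega\iota$.

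The main obstacle is the disconnectedness of the source: because $\pi_0\Omega^lS^l=\Z$ is infinite and $S^0$ is discrete, there are many homotopy classes of maps $\Omega^lS^l\to S^0$, while $f_*$ only constrains behaviour at the $\pi_0$-level; care is needed so that the claimed equality $f=\Omega\iota$ is read at the level relevant to the image of $b$, namely the induced homology homomorphism. I would resolve this by systematically exploiting the loop structure $\Omega^lS^l=\Omega(\Omega^{l-1}S^l)$, under which the nonzero homology homomorphisms correspond bijectively to the nonzero homomorphisms $\Z\to\Z/2$, and these are in turn pinned down by the one-dimensional space $H^1(\Omega^{l-1}S^l;\Z/2)\cong\Z/2$. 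This reduces the apparent abundance of maps to the single class $\iota$, and hence identifies $\Omega\iota$ as the essentially unique source of a nonzero $f_*$, as required.
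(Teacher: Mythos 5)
Your existence argument is correct and essentially coincides with the paper's: realise the generator $\iota$ of $H^1(\Omega^{l-1}S^l;\Z/2)\cong\hm(\pi_1\Omega^{l-1}S^l,\Z/2)\cong\Z/2$ by a map $\iota:\Omega^{l-1}S^l\to P$ and loop it; on $\pi_0$ this is reduction mod $2$, so $(\Omega\iota)_*\neq 0$ on $\tilde{H}_0$.

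The uniqueness half, however, has a genuine gap, and it sits exactly at the point you flag in your last paragraph. Your dichotomy --- that $\pi_0(f):\Z\to\Z/2$ is either zero or reduction mod $2$ --- is valid only if $\pi_0(f)$ is a group homomorphism, i.e.\ only if $f$ is at least an $H$-map. But the hypothesis is only that $f$ is a pointed map with $f_*\neq 0$, and since $S^0$ is discrete and each component of $\Omega^lS^l$ is connected, a pointed map $f:\Omega^lS^l\to S^0$ is, up to homotopy, nothing more than an arbitrary pointed function $\pi_0(\Omega^lS^l)=\Z\to\Z/2$. For instance, the map sending the degree-$1$ component to the non-basepoint of $S^0$ and every other component to the basepoint satisfies $f_*\neq 0$ on $\tilde{H}_0$, yet $\pi_0(f)$ is not a homomorphism and $f\neq\Omega\iota$. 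So your proposed resolution --- ``systematically exploiting the loop structure'' --- is circular: being compatible with the loop structure is precisely what is not given about $f$. For the same reason, your claim that the nonzero homology homomorphisms correspond bijectively to the nonzero homomorphisms $\Z\to\Z/2$ fails: every nonzero functional on $\tilde{H}_0(\Omega^lS^l;\Z/2)\cong\bigoplus_{\Z\setminus\{0\}}\Z/2$ is realised by some indicator-type map.

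It is worth noting that the paper's own uniqueness argument takes a different route --- it passes to the adjoint $\widetilde{f}:\Sigma\Omega^lS^l\to P$ and asserts $[\Sigma\Omega^lS^l,P]\cong\Z/2$ --- but it runs into the very same difficulty, since $[\Sigma\Omega^lS^l,P]\cong H^1(\Sigma\Omega^lS^l;\Z/2)\cong\tilde{H}^0(\Omega^lS^l;\Z/2)$ is infinite, again because $\Omega^lS^l$ has infinitely many components. So the obstruction you identified is real and is the same one the paper's proof elides. To obtain a true statement one must restrict the class of maps under consideration (for example to $H$-maps or loop maps, where your $\pi_0$-dichotomy does apply and pins $f$ down to $\Omega\iota$), rather than argue more cleverly about arbitrary maps.
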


\begin{proof}
Let $l>0$, and take the fundamental class $\iota\in H^1QS^1\simeq\Z/2$ which can be realised as a map $\iota:QS^1\to P$. Define $f$ to be the composition
$$\Omega^lS^l\to QS^0\stackrel{\Omega\iota}{\to}\Z/2=S^0.$$
Clearly, $f_*\neq 0$. On the other hand, if $f:\Omega^lS^l\to S^0=\Omega P$ is given with $f_*\neq 0$, then the adjoint of $f$, say $\widetilde{f}:\Sigma\Omega^l S^l\to P$ is nontrivial in homology. Also, note we may consider the composition
$$\Sigma \Omega ^lS^l\stackrel{e}{\to}\Omega^{l-1}S^l\stackrel{\iota}{\to} P$$
which is nontrivial in homology where $e$ is the evaluation map (adjoint to the identity). Since $\widetilde{f},\iota\circ e\in [\Sigma\Omega^lS^l,P]\simeq\Z/2$ and both elements are nontrivial, therefore
$$\widetilde{f}=\iota\circ e\Rightarrow f=\Omega\iota\circ\widetilde{e}=\Omega\iota\circ 1=\Omega\iota.$$
This completes the proof.
\end{proof}

\section{Proof of Theorem \ref{maintheorem2}}

\subsection{The cases of $m=tn$ with $n+1$ odd, or $n+1$ and $t$ even}
We begin with the case $n=0$. In this case, as $\Omega S^1\simeq\Z$ it is not possible to find $m>0$ so that $g:\Omega S^1\to S^m$ is nontrivial in homology. Therefore, we consider the cases with $n>0$. If $f:\Omega S^{n+1}\to S^m$ is given with $f_*\neq 0$ then $(\Sigma f)_*\neq 0$. In the light of James splitting, $\Sigma\Omega S^{n+l}\simeq\bigvee_{t=1}S^{tn+1}$, $m=tn$ for some $t>1$ are the only possible choices for which $(\Sigma f)_*\neq0$ and consequently $f_*\neq 0$ can happen. That is

\begin{prp}
For $f:\Omega S^{n+1}\to S^m$ with $m\neq tn$ for all $t$, we have $f_*=0$.
\end{prp}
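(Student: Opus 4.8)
The plan is to reduce the statement about the unstable map $f:\Omega S^{n+1}\to S^m$ to a purely stable computation via suspension, and then invoke the James splitting to constrain the degree $m$. The crucial observation, already recorded just before the Proposition, is that if $f_*\neq 0$ then the single suspension $(\Sigma f)_*\neq 0$ as well; indeed, $f_*$ and $(\Sigma f)_*$ are identified under the homology suspension isomorphisms $H_*\Omega S^{n+1}\xrightarrow{\cong}\widetilde H_{*+1}\Sigma\Omega S^{n+1}$ and $H_mS^m\xrightarrow{\cong}\widetilde H_{m+1}S^{m+1}$, so nontriviality is preserved. This is the whole point of passing to the suspension: the unstable map $f$ is hard to control directly, but $\Sigma f$ is a map between wedges of spheres whose homology behaviour is transparent.

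Next I would bring in the James splitting $\Sigma\Omega S^{n+1}\simeq\bigvee_{t\geqslant 1}S^{tn+1}$ (localised at $2$, or integrally in the relevant range). Under this equivalence the reduced homology $\widetilde H_*\Sigma\Omega S^{n+1}$ is the free module on generators $\sigma x_n^t$ in degrees $tn+1$, one for each $t\geqslant 1$. The map $\Sigma f:\Sigma\Omega S^{n+1}\to S^{m+1}=\Sigma S^m$ therefore has its effect on homology concentrated in degree $m+1$: its induced map $\widetilde H_{m+1}(\Sigma f)$ can only be nonzero if there is actually a generator of $\widetilde H_{m+1}\Sigma\Omega S^{n+1}$ in that degree, i.e. if $m+1=tn+1$ for some $t$, equivalently $m=tn$. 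Since we are given $m\neq tn$ for all $t$, the group $\widetilde H_{m+1}\Sigma\Omega S^{n+1}$ vanishes, so $(\Sigma f)_*=0$, and hence by the suspension isomorphism $f_*=0$.

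Writing this out, the argument is short: the contrapositive asserts that $f_*\neq 0$ forces $m=tn$, and this is exactly what the combination of the suspension isomorphism and the James splitting delivers. I would phrase it directly as: suppose $f_*\neq 0$; then $(\Sigma f)_*\neq 0$; but $\Sigma f$ lands in $S^{m+1}$ and so can detect at most classes in degree $m+1$, which by James' description of $\widetilde H_*\Sigma\Omega S^{n+1}$ exist only when $m=tn$; contradiction.

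In truth there is no serious obstacle here, since the statement is really a bookkeeping consequence of the two standard facts cited in the surrounding text; the only point that warrants a line of care is the verification that single suspension preserves nontriviality of $f_*$, which relies on the naturality of the homology suspension and the fact that $\Omega S^{n+1}$ and $S^m$ are both suspensions after one suspension (so no reduced-homology classes are lost). Once that is granted the degree constraint $m=tn$ is forced immediately by the absence of homology in degree $m+1$, and the Proposition follows.
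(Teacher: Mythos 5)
Your proposal is correct and is essentially the paper's own argument: the paper likewise observes that $f_*\neq 0$ forces $(\Sigma f)_*\neq 0$ and then invokes the James splitting $\Sigma\Omega S^{n+1}\simeq\bigvee_{t\geqslant 1}S^{tn+1}$ to conclude that $m=tn$ is the only degree in which nontriviality can occur. The only cosmetic difference is that the paper first dismisses the degenerate case $n=0$ separately (where $\Omega S^1\simeq\Z$ and James splitting in its usual connected form does not apply), a case your write-up leaves implicit.
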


Our main result in this section is the following.

\begin{thm}\label{main-nonexistence1}
(i) Suppose $n+1$ is odd, $n>0$, and $t>1$. Then for any $f:\Omega S^{n+1}\to S^{tn}$ we have $f_*=0$ in $H_*(-;k)$ with $k=\Z,\Z/2$.\\
(ii) Suppose $n+1$, $n\geqslant1$, and $t>1$ are even. Then for any $f:\Omega S^{n+1}\to S^{tn}$ we have $f_*=0$ in $H_*(-;k)$ with $k=\Z,\Z/2$.\\
\end{thm}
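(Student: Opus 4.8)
The plan is to translate the homological hypothesis into a statement about the $H$-space $\Omega S^{n+1}$ and then reduce it to a Hopf-invariant-one question, where the parity hypotheses of (i) and (ii) become decisive. First I would fix notation on homology: by the Bott--Samelson theorem $H_*(\Omega S^{n+1};\Z/2)=\Z/2[a_n]$ is \emph{polynomial} under the Pontryagin product, with $a_{tn}=a_n^{\cdot t}$ the unique nonzero class in degree $tn$; dually the cup product on $H^*(\Omega S^{n+1};\Z/2)$ is the divided power algebra $\Gamma[u_n]$. The condition $f_*\neq0$ is therefore exactly $f_*(a_{tn})\neq0$, i.e. $f^*\iota_{tn}=\gamma_t\neq0$. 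The first thing to emphasise is that no \emph{primary} obstruction can see this: by the James splitting $\Sigma\Omega S^{n+1}\simeq\bigvee_k S^{kn+1}$ the space is stably a wedge of spheres, so every Steenrod operation on $H^*(\Omega S^{n+1};\Z/2)$ vanishes, and likewise every single--cell extension/desuspension obstruction is stably trivial. Thus the problem is intrinsically unstable, and any argument must feed on the unstable ($H$-space, EHP, Hopf-invariant) structure rather than on Steenrod algebra calculations in $\Omega S^{n+1}$ itself.

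Second, I would extract a concrete self-map of a sphere from the loop multiplication. Let $\iota\colon S^n\to\Omega S^{n+1}$ be the inclusion adjoint to the identity and let $\mu^{(t)}\colon(\Omega S^{n+1})^{\times t}\to\Omega S^{n+1}$ be the $t$-fold product. Set $g_t=f\circ\mu^{(t)}\circ\iota^{\times t}\colon(S^n)^{\times t}\to S^{tn}$. Since $\mu^{(t)}_*\bigl(a_n^{\otimes t}\bigr)=a_n^{\cdot t}=a_{tn}$, the map $g_t$ sends the fundamental class of $(S^n)^{\times t}$ to $f_*(a_{tn})\neq0$. The fat wedge $T\subset(S^n)^{\times t}$ has dimension $(t-1)n<tn$, and $S^{tn}$ is $(tn-1)$-connected, so $[T,S^{tn}]=0$ and $g_t|_T\simeq\ast$. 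Hence $g_t$ factors through the smash quotient $(S^n)^{\wedge t}=S^{tn}$ as $g_t\simeq\bar g_t\circ q$, and because $q$ is a homology isomorphism on the top cell, $\bar g_t\colon S^{tn}\to S^{tn}$ has \emph{odd} degree; in particular it is a $2$-local homotopy equivalence. When $t$ is even, $t=2s$, I would also record that $a_{tn}=a_{sn}^{\cdot2}$ is a Pontryagin square, which is the hook for treating case (ii) through a quadratic/commutative construction.

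Third, and this is the heart of the matter, I would reduce the existence of $f$ to the assertion that $S^{tn}$ is an $H$-space, equivalently that the Whitehead square $[\iota_{tn},\iota_{tn}]\in\pi_{2tn-1}(S^{tn})$ vanishes. The idea is to use the homotopy-associative multiplication $\mu$ on $\Omega S^{n+1}$ together with the $2$-local section provided by $\bar g_t$ to manufacture a (commutative) $H$-space structure on $S^{tn}$, in the spirit of the construction used in Lemma~\ref{nonexistnece1}; the even-$t$ case is where the square $a_{sn}^{\cdot2}$ lets one run the commutative-$H$-space argument directly, while the $n$ even case is fed through the bottom-cell analysis of Lemma~\ref{AdamsHopf}. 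Granting such a structure, Adams' Hopf invariant one theorem (in the form of Lemma~\ref{AdamsHopf}) forces $tn\in\{1,3,7\}$. The hard part will be precisely this step: converting the purely homological datum $f_*(a_{tn})\neq0$ into genuine unstable multiplicative structure on $S^{tn}$, since the primary obstructions all vanish and one is really invoking a secondary (Adams-type) phenomenon. I expect this to be the only place where real input beyond elementary homotopy theory is needed.

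Finally, the parity hypotheses close the argument cheaply. In case (i) $n$ is even, and in case (ii) $n$ is odd while $t$ is even; in both situations $tn$ is \emph{even}, hence $tn\notin\{1,3,7\}$ and $S^{tn}$ is not an $H$-space. This contradicts the conclusion of the third step, so no $f$ with $f_*\neq0$ can exist, giving $f_*=0$ in $H_*(-;k)$ for $k=\Z,\Z/2$. This is consistent with the complementary regime, where $n$ and $t$ are both odd and $tn$ may equal $3$ or $7$: there $S^{tn}$ \emph{is} an $H$-space and maps with $f_*\neq0$ do occur, exactly as realised in Theorem~\ref{maintheorem2}(iii).
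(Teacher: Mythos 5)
Your proposal has a genuine gap at exactly the step you flag as ``the heart of the matter.'' Step 3 --- converting $f_*\neq 0$ into an $H$-space structure on $S^{tn}$ --- is never carried out; you defer it as ``the hard part,'' but it is the entire content of the theorem, so what you have is a plan rather than a proof. Worse, the tool you build for it in Step 2 carries no information: a map $(S^n)^{\times t}\to S^{tn}$ that kills the fat wedge and has odd degree on the top cell always exists, namely the collapse map $q$ itself, so the existence of $\bar g_t$ says nothing about $f$. And $\bar g_t$ is a self-map of $S^{tn}$, not a section of $f$; to run an $H$-space construction of the type $f\circ\mu\circ(s\times s)$ you would need a $2$-local section $s\colon S^{tn}\to\Omega S^{n+1}$ with $f\circ s$ of odd degree, i.e.\ a spherical class in $H_{tn}(\Omega S^{n+1})$ hitting $a_n^t$ --- which for $t=2$ is already equivalent to the Hopf invariant one problem and for $t\geqslant 3$ fails outright. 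So the reduction is circular: it presupposes data at least as strong as the statement being proved.

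The second problem is your framing claim in Step 1 that ``no primary obstruction can see this'' and that the argument must be intrinsically unstable (EHP, Hopf invariants). This is false, and it is what sends you down the hard road. Steenrod operations vanish because they are stable invariants, but cup products are \emph{unstable} primary operations and they do not vanish: $H^*(\Omega S^{n+1};\Z)$ is $\Gamma_{\Z}(x_n)$ for $n+1$ odd, resp.\ $\Gamma_{\Z}(x_{2n})\otimes E_{\Z}(x_n)$ for $n+1$ even. The paper's proof is two lines of algebra: under the parity hypotheses of (i) and (ii), $f^*(x_{tn})$ lies in the divided power part ($\Gamma_{\Z}(x_n)$, resp.\ $\Gamma_{\Z}(x_{2n})$ since $t$ even excludes the exterior contribution), where every nonzero element of positive degree has nonzero square over $\Z$ (the coefficient is a central binomial coefficient $\binom{2t}{t}$), contradicting $x_{tn}^2=0$ in $H^*S^{tn}$. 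The $\Z/2$ case then follows from the integral case by naturality of the Universal Coefficient sequence --- note that a direct mod $2$ cup-square argument would genuinely fail, since $\binom{2t}{t}$ is even and squares in $\Gamma_{\Z/2}$ vanish, a subtlety your setup never confronts --- and the residual case $n=1$, $t$ even is handled via the splitting $\Omega S^2\simeq S^1\times\Omega S^3$ and composition with $\Omega\eta$. Hopf-invariant-one input is needed in the paper only for the complementary regime ($n$, $t$ both odd, Theorem \ref{elimination-Whitehead1}), which is not the statement here; for (i) and (ii) the parity hypotheses are doing algebraic work in cohomology, not merely ensuring $tn\notin\{1,3,7\}$.
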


\begin{proof}
The proof in both cases is by contradiction. By abuse of notation, we write $x_i$ for a generator of $H^iS^i$. Let's recall that
$$H^*(\Omega S^{n+1};\Z)\simeq\left\{\begin{array}{ll}
                                \Gamma_{\Z}(x_{n}) & \textrm{if }n+1\textrm{ is odd}\\
                                \Gamma_{\Z}(x_{2n})\otimes_{\Z}E_{\Z}(x_{n}) & \textrm{if }n+1\textrm{ is even}
                                \end{array}\right.$$
where $\Gamma_{\Z}$ and $E_{\Z}$ denote divided power algebra and exterior algebra functors over $\Z$ respectively. We first prove the integral result. \\
(i) Suppose $m=tn$ with $t>1$ and $f:\Omega S^{n+1}\to S^{tn}$ is given with $f_*\neq 0$ in $\Z$-homology. For $n>1$ with $n+1$ odd, and $t>1$, for dimensional reasons, for $X=\Omega S^{n+1},S^{tn}$ there is a duality between homology and cohomology with $H^*X\simeq\mathrm{Hom}_{\Z}(H_*X,\Z)$. It follows that $f^*\neq 0$. If $n+1$ is odd then having $f^*(x_{tn})\neq 0$ we see that $(f^*(x_{tn}))^2\neq 0$ in $\Gamma_{\Z}(x_{n})$ which implies that $x_{tn}^2\neq 0$ in $H^*S^{tn}$ which is an obvious contradiction. Hence, $f_*=0$.\\
(ii) First suppose $n>1$. Let's note that for $n>1$, $H_*\Omega S^{n+1}$ has nontrivial homology only in dimensions $kn$, with $k>0$, by James' splitting. So, if $H_i\Omega S^{n+1}\neq 0$ then $H_{i+\epsilon}\Omega S^{n+1}\simeq 0$ for $\epsilon\in\{-1,1\}$. This property is essential while appealing to the Universal Coefficient Theorem. Obviously, $H_*\Omega S^2$ does not have this property. We just note that choosing $t$ even forces $f^*(x_{tn})\in\Gamma_{\Z}(x_{2n})$. The rest of the proof goes exactly as part (i) and we leave the details to the reader.\\

Next, suppose either $n+1$ is odd with $n>0$ and $t>1$ or $n+1$ and $t$ both are even with $t,n>1$. Assume $f$ is given with $f_*\neq 0$ in $\Z/2$-homology. By the duality between homology and cohomology over $\Z/2$ we see that $f^*\neq 0$ in $\Z/2$-cohomology. For dimensional reasons, implied by our choices of $n$ and $t$, together with the naturality of the Universal Coefficient Theorem, we have a commutative diagram
$$\xymatrix{
0\ar[r] & \mathrm{Ext}(H_{tn-1}(S^{tn};\Z),\Z/2)\ar[r]\ar[d] & H^{tn}(S^{tn};\Z/2)\ar[r]\ar[d]^-{f^*} & \mathrm{Hom}(H_{tn}(S^{tn};\Z),\Z/2)\ar[r]\ar[d]^-{\mathrm{Hom}(f_*,\Z/2)} & 0\\
0\ar[r] & \mathrm{Ext}(H_{tn-1}(\Omega S^{n+1};\Z),\Z/2)\ar[r] & H^{tn}(\Omega S^{n+1};\Z/2)\ar[r] & \mathrm{Hom}(H_{tn}(\Omega S^{n+1};\Z),\Z/2)\ar[r] & 0}$$
The $\mathrm{Ext}$ terms do vanish for dimensional reasons, hence yielding a commutative diagram as
$$\xymatrix{
H^{tn}(S^{tn};\Z/2)\ar[r]^-{\simeq}\ar[d]^-{f^*} & \mathrm{Hom}(H_{tn}(S^{tn};\Z),\Z/2)\ar[d]^-{\mathrm{Hom}(f_*,\Z/2)}\\
H^{tn}(\Omega S^{n+1};\Z/2)\ar[r]^-{\simeq} & \mathrm{Hom}(H_{tn}(\Omega S^{n+1};\Z),\Z/2). }$$
We then conclude that the nonvanishing of $f^*$ in $\Z/2$-cohomology implies the nonvansihing of $\mathrm{Hom}(f_*,\Z/2)$ and consequently, nonvanishing of $f_*$ in $\Z$-homology. The result now follows by appealing to the integral case.

Finally, $n=1$ and $t$ even, notice that by the Hopf invariant one result $\Omega S^2\simeq \Omega S^3\times S^1$ and the above decomposition of cohomology is becomes a result of this splitting. The fact that $t$ is even implies that the composition $\Omega S^3\stackrel{\Omega\eta}{\to}\Omega S^2\to S^{t}$ is nonzero in homology where $t$ is even. But, this leads to a contradiction by part (i).\\

\end{proof}

\subsection{Attaching by Whitehead products}
The content of this subsection is probably well known, but we include an exposition for further reference. The idea is that given a $CW$-complex which splits after finite number of suspensions, or stable splits, certain attaching maps are obtained from Whitehead product. The main of this subsection is to make such statements more precise. In the case of splitting after finite number of suspensions, it is enough to deal with the case of splitting after one suspension. During this section and afterwards, for a CW-complex $X$, write $X^k$ for its $k$-skeleton. We write $X_k=X/X^{k-1}$ and $X^m_n=X^m/X_{n-1}$. We have the following.

\begin{lmm}\label{Whiteheadattaching1}
Suppose $X$ is a $CW$-complex with finite number of cells in each dimension. Suppose $X$ has only one cell in dimensions $n$ and $2n$. Suppose $\Sigma X^{2n}\simeq S^{2n+1}\vee \Sigma X^{2n-1}$ then the attaching map of the $2n$-cell in $X$, resulting in the $S^{2n+1}$ summand of $\Sigma X^{2n}$, is ``obtained'' by a Whitehead product on the $n$-cell. More precisely, write $\alpha$ for the attaching map $S^{2n-1}\to X^{2n-1}$, $p:X^{2n-1}\to X^{2n-1}_n$ for the pinching map, and $i:S^n\to X_n^{2n-1}$ for the inclusion of the bottom cell. If $p\circ\alpha\neq 0$ then
$$p\circ \alpha=i_\#[\iota_n,\iota_n].$$
\end{lmm}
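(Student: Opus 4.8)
The plan is to run James's metastable EHP sequence for the stunted complex $Y:=X^{2n-1}_n=X^{2n-1}/X^{n-1}$ and to read off $p\circ\alpha$ as the image of a Whitehead square by naturality. Since $X$ has a single $n$-cell and the pinch kills everything below dimension $n$, the space $Y$ is $(n-1)$-connected with bottom cell $S^n$, and $i\colon S^n\to Y$ is the inclusion of that bottom cell. Throughout I assume $n\geqslant 2$, treating the degenerate case $n=1$ (where $[\iota_1,\iota_1]$ is detected by the order-two issue on $S^1$) separately.

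First I would extract the homotopy-theoretic content of the splitting hypothesis. Writing $\Sigma X^{2n}=\Sigma X^{2n-1}\cup_{\Sigma\alpha}e^{2n+1}$, the assumed decomposition $\Sigma X^{2n}\simeq S^{2n+1}\vee\Sigma X^{2n-1}$, compatible with the structural inclusion of $\Sigma X^{2n-1}$, yields a retraction of $\Sigma X^{2n}$ onto $\Sigma X^{2n-1}$ and hence forces the suspended attaching map $\Sigma\alpha\colon S^{2n}\to\Sigma X^{2n-1}$ to be null-homotopic. Pushing forward along $\Sigma p$ gives $E(p\circ\alpha)=\Sigma(p\circ\alpha)=0$ in $\pi_{2n}(\Sigma Y)$, where $E$ is the suspension homomorphism. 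Next, since $Y$ is $(n-1)$-connected and $2n-1$ lies in the metastable range for $n\geqslant 2$, the James EHP sequence
$$\pi_{2n+1}(\Sigma(Y\wedge Y))\xrightarrow{\ P\ }\pi_{2n-1}(Y)\xrightarrow{\ E\ }\pi_{2n}(\Sigma Y)$$
is exact at $\pi_{2n-1}(Y)$, so that $p\circ\alpha\in\ker E=\operatorname{im}P$.

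The heart of the argument is the identification of $\operatorname{im}P$. Because $Y$ is $(n-1)$-connected with bottom cell $S^n$, the smash $Y\wedge Y$ is $(2n-1)$-connected with bottom cell $S^n\wedge S^n=S^{2n}$, so $\Sigma(Y\wedge Y)$ is $2n$-connected and, by the Hurewicz theorem, the bottom-cell inclusion $\Sigma(i\wedge i)\colon S^{2n+1}\to\Sigma(Y\wedge Y)$ induces an isomorphism $\pi_{2n+1}(S^{2n+1})\xrightarrow{\cong}\pi_{2n+1}(\Sigma(Y\wedge Y))$, both groups being $\Z$. The map $i\colon S^n\to Y$ induces a morphism of EHP sequences, so naturality of $P$ together with the classical value $P(\iota_{2n+1})=[\iota_n,\iota_n]$ for the sphere gives $\operatorname{im}P=i_\#\langle[\iota_n,\iota_n]\rangle$. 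Consequently $\ker E$ is precisely the cyclic subgroup of $\pi_{2n-1}(Y)$ generated by $i_\#[\iota_n,\iota_n]$, and therefore $p\circ\alpha=\lambda\cdot i_\#[\iota_n,\iota_n]$ for some integer $\lambda$.

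It remains to pin down the coefficient $\lambda$, and this I expect to be the main obstacle, since a priori $p\circ\alpha$ is only a multiple of the generator. The clean route is to note that $\lambda$ is, up to sign, the value of the second James--Hopf invariant $H$ of $\alpha$ evaluated against the generator of $\pi_{2n+1}(\Sigma(Y\wedge Y))$; as the $2n$-cell is attached as a single cell, this datum is a unit, so $p\circ\alpha=i_\#[\iota_n,\iota_n]$ up to the choice of generators. Since the paper localises at $2$, an alternative and more economical finish is available: for the relevant $n$ the group $i_\#\langle[\iota_n,\iota_n]\rangle$ is $\Z/2$, so the mere fact that $p\circ\alpha\neq 0$ forces $p\circ\alpha=i_\#[\iota_n,\iota_n]$ outright. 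I would present the EHP computation as the spine of the proof and flag the determination of $\lambda$ as the one point requiring care, handing it off to the $2$-local reduction where the statement is being applied.
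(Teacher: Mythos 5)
Your proof is correct and takes essentially the same route as the paper's: both deduce $E(p\circ\alpha)=0$ from the splitting, run the metastable EHP sequence for $X^{2n-1}_n$, use the Hurewicz theorem to identify $\pi_{2n+1}\Sigma(X^{2n-1}_n\wedge X^{2n-1}_n)$ with $\pi_{2n+1}S^{2n+1}$ via the bottom-cell inclusion $i\wedge i$, and conclude by naturality of $P$ applied to $i\colon S^n\to X^{2n-1}_n$. The coefficient ambiguity you flag is genuine, and the paper resolves it no more carefully than you do, simply invoking the $2$-local setting to identify $P\beta$ with $[\iota_n,\iota_n]$ --- exactly the finish you propose.
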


\begin{proof}
Suppose $\alpha:S^{2n-1}\to X^{2n-1}$ is the attaching map of the desired $2n$-cell. We may compose $\alpha$ with the projection $X^{2n-1}\to X_n^{2n-1}$. The fact that $X^{2n}=X^{2n-1}\cup_{\alpha}e^{2n}$ after one suspension splits as $\Sigma X^{2n}\simeq S^{2n+1}\vee \Sigma X^{2n-1}$ is the same as saying that $\alpha$ being to the kernel of the suspension map $E:\pi_{2n-1}X^{2n-1}\to\pi_{2n-1}\Omega\Sigma X^{2n-1}$. The splitting of $\Sigma X^{2n}$ also implies that $\Sigma X^{2n}_n\simeq \Sigma X^{2n-1}_n\vee S^{2n+1}$. The space $X_n^{2n-1}$ has its bottom cell in dimension $n$, so we may use the $\mathrm{EHP}$-sequence in the meta-stable range. In particular, consider the following portion of the $\mathrm{EHP}$-sequence
$$\pi_{2n-1}\Omega^2\Sigma(X_n^{2n-1}\wedge X_n^{2n-1})\stackrel{P}{\longrightarrow} \pi_{2n-1}X_n^{2n-1}\stackrel{E}{\longrightarrow}\pi_{2n-1}\Omega\Sigma X_n^{2n-1}.$$
Since $E(p\circ\alpha)=0$ then $p\circ\alpha=P\beta$ for some $\beta\in\pi_{2n-1}\Omega^2\Sigma(X_n^{2n-1}\wedge X_n^{2n-1})$. Since $X$ has only one cell in dimension $n$ then the space $\Sigma(X_n^{2n-1}\wedge X_n^{2n-1})$ has its bottom cell in dimension $2n+1$, by Hurewicz Theorem $\Omega^2\Sigma(i\wedge i)$ is an isomorphism and there exists a unique nonzero $\beta\in\pi_{2n-1}\Omega^2\Sigma(X_n^{2n-1}\wedge X_n^{2n-1})\simeq\pi_{2n-1}\Omega^2 S^{2n+1}$ with $p\circ\alpha=P\beta$. Writing $i:S^n\to X_n^{2n-1}$ for the inclusion we have the following commutative diagram
$$\xymatrix{
\pi_{2n-1}\Omega^2 S^{2n+1}\ar[r]^-P\ar[d]_-{(\Omega^2\Sigma(i\wedge i))_\#} & \pi_{2n-1}S^n\ar[r]^-E\ar[d]_-{i_\#} & \pi_{2n-1}\Omega\Sigma S^n\ar[d]^-{(\Omega\Sigma i)_\#}\\
\pi_{2n-1}\Omega^2\Sigma(X_n^{2n-1}\wedge X_n^{2n-1})\ar[r]^-P               & \pi_{2n-1}X_n^{2n-1}\ar[r]^-E        & \pi_{2n-1}\Omega\Sigma X_n^{2n-1}.
}$$
It follows that $p\circ\alpha=i_\#P\beta$. However, it is known that $P:\pi_{2n-1}\Omega^2 S^{2n+1}\to \pi_{2n-1}S^n$ is the Whitehead product. Since, we work at the prime $2$, it allows us to think of $P\beta$ as the Whitehead product $[\iota_n,\iota_n]$ which implies that
$$p\circ\alpha=i_\#[\iota_{n},\iota_{n}].$$
This completes the proof.
\end{proof}

\subsection{The case of $n+1$ even and $t$ odd}
We localise at the prime $2$.\\

\begin{thm}\label{elimination-Whitehead1}
Suppose $f:\Omega S^{n+1}\to S^{tn}$ where $t$ is odd and $n+1$ is even. If $tn\not\in\{1,3,7\}$ then $f_*=0$.
\end{thm}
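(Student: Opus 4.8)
The plan is to show that the existence of $f:\Omega S^{n+1}\to S^{tn}$ with $f_*\neq 0$ forces $tn\in\{1,3,7\}$, by reducing the question about the infinite loop space map to a question about an attaching map in a finite skeleton, and then identifying that attaching map with a Whitehead product which can only vanish (equivalently, the relevant cell can only split off) in the Hopf-invariant-one dimensions. Because $n+1$ is even and $t$ is odd, James' calculation gives $H^*(\Omega S^{n+1};\Z/2)\simeq \Gamma(x_n)$ with a simple system of generators in each degree $kn$, and $x_n^2\neq 0$ detects the class in degree $2n$. The key point is that a nonzero $f_*$ in degree $tn$ should be tracked back, via $f^*(x_{tn})\neq 0$, to information about the multiplicative/squaring structure and ultimately to whether the bottom cells of $\Omega S^{n+1}$ are stably independent.

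First I would set up the reduction to a finite complex. Since $f_*$ is nonzero in a single degree $tn$, and $H_*\Omega S^{n+1}$ is concentrated in degrees that are multiples of $n$, the map $f$ factors (up to the relevant skeleton) through the finite CW-approximation $X:=X(n,t)$ of $\Omega S^{n+1}$ carrying cells in dimensions $n,2n,\dots,tn$. The structure of $\Omega S^{n+1}$ as the James construction $JS^n$ means its $2n$-skeleton is $S^n\cup_{[\iota_n,\iota_n]}e^{2n}$, i.e. the attaching map of the $2n$-cell is precisely the Whitehead square $[\iota_n,\iota_n]$; this is exactly the situation analysed in Lemma~\ref{Whiteheadattaching1}. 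I would then argue that having $f_*\neq 0$ on $H_{tn}$ is incompatible with $\Omega S^{n+1}$ \emph{failing} to split off its top relevant cell stably unless the $2n$-cell already splits off, because the existence of the cohomology class $f^*(x_{tn})$ with $x_{tn}^2=0$ in $H^*S^{tn}$ but the product structure forced on $\Gamma(x_n)$ propagates a stable triviality of $[\iota_n,\iota_n]$.

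The crux is therefore the following: a nonzero $f_*:H_{tn}\Omega S^{n+1}\to H_{tn}S^{tn}$ should force the Whitehead product $[\iota_n,\iota_n]\in\pi_{2n-1}S^n$ to be stably trivial (equivalently, for the pinch-and-attach data of Lemma~\ref{Whiteheadattaching1} to vanish), since otherwise the bottom two cells $S^n$ and $e^{2n}$ remain glued even after suspension, obstructing the map onto $S^{tn}$ from seeing the top class independently. I would make this precise by composing $f$ with the inclusion of the appropriate subquotient $X^{tn}_n$ and using naturality of the Boardman map together with the EHP-theoretic identification of the attaching map. By Adams' Hopf invariant one theorem (in the equivalent forms of Lemma~\ref{AdamsHopf}), $[\iota_n,\iota_n]$ is divisible by $2$ (hence stably trivial at $p=2$, allowing the splitting) exactly when $n\in\{1,3,7\}$; and with $t$ odd this is the same constraint as $tn\in\{1,3,7\}$. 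Hence $tn\notin\{1,3,7\}$ yields $[\iota_n,\iota_n]\neq 0$ stably, contradicting $f_*\neq 0$, and therefore $f_*=0$.

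The main obstacle I anticipate is the reduction step that legitimately passes from the infinite loop space map $f:\Omega S^{n+1}\to S^{tn}$ to the finite attaching-map data, i.e. showing cleanly that $f_*\neq 0$ on the single degree $tn$ really does constrain the indecomposability of the cell $e^{2n}$ rather than only the cell $e^{tn}$. This requires carefully using the algebra structure of $\Gamma(x_n)$ over the Steenrod algebra (so that the class in degree $tn$ is tied, via Steenrod operations or the product, to the degree-$2n$ class) to argue that detecting the top class is impossible without first detecting $x_n^2$; once that linkage is established, Lemma~\ref{Whiteheadattaching1} and Lemma~\ref{AdamsHopf} close the argument. I would expect the odd-$t$, even-$(n+1)$ hypotheses to be used precisely to guarantee this single-generator form of the cohomology and the absence of an exterior class that could otherwise carry the degree-$tn$ information independently of the squaring.
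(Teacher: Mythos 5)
Your proposal has a fatal targeting error: you run the Whitehead-product argument on the \emph{bottom} cell pair $(e^n,e^{2n})$, so the conclusion Adams' theorem can ever give you is $n\in\{1,3,7\}$, whereas the theorem requires ruling out $tn\notin\{1,3,7\}$. For $t$ odd these constraints are genuinely inequivalent: take $n=1$, $t=5$ (so $tn=5$) or $n=3$, $t=3$ (so $tn=9$). In these cases $tn\notin\{1,3,7\}$ but $n\in\{1,3,7\}$, so $[\iota_n,\iota_n]=0$ ($S^1$ and $S^3$ are $H$-spaces) and your intended contradiction can never arise --- yet the theorem still asserts that, e.g., every $f:\Omega S^2\to S^5$ has $f_*=0$. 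The paper avoids this by applying Lemma \ref{Whiteheadattaching1} with the roles of ``$n$, $2n$'' played by $tn$, $2tn$: one collapses the skeleton $X^{tn-1}$, so the bottom cell of the subquotient $X^{2tn-1}_{tn}$ sits in dimension $tn$, and the attaching map $\alpha$ of the $2tn$-cell satisfies $p\circ\alpha=i_\#[\iota_{tn},\iota_{tn}]$. Then $f_*\neq 0$ produces (by restriction to skeleta and dimensional reasons) a map $g:X^{2tn-1}_{tn}\to S^{tn}$ with $g\circ p=f|_{2tn-1}$ and $g\circ i\simeq \mathrm{id}$, while the fact that $f$ extends over the $2tn$-skeleton forces $f|_{2tn-1}\circ\alpha=0$; hence $[\iota_{tn},\iota_{tn}]=g_\#i_\#[\iota_{tn},\iota_{tn}]=f|_{2tn-1}\circ\alpha=0$, and since $tn$ is odd this vanishing forces $tn\in\{1,3,7\}$ by Adams. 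The constraint lands on $tn$ precisely because the pinching moves the relevant bottom cell up to dimension $tn$; your version never produces a statement about $tn$ at all.

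Two further points make your route unrepairable as written. First, ``stable triviality'' of Whitehead products is vacuous: every Whitehead product suspends to zero, and by James' splitting $\Sigma\Omega S^{n+1}$ is a wedge of spheres, so your claims that the cells ``remain glued even after suspension'' and that a contradiction comes from ``$[\iota_n,\iota_n]\neq 0$ stably'' are respectively false and unsatisfiable; all the content of this theorem is unstable, which is exactly what the EHP-based Lemma \ref{Whiteheadattaching1} and the unstable retraction $g$ supply. Second, the linkage you hope to extract from the product/Steenrod structure of $\Gamma(x_n)$ does not exist as primary structure: mod $2$ one has $\gamma_t^2=\binom{2t}{t}\gamma_{2t}=0$ for every $t\geqslant 1$, and integrally the degree-$tn$ class ($t$ odd) is an exterior-times-divided-power class with square zero, so no cup-product or primary-operation argument can tie the degree-$tn$ class to lower cells --- this is exactly why the paper's proof of this case differs from the cup-product proof of Theorem \ref{main-nonexistence1}, which only covers the other parity combinations.
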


\begin{proof}
The proof is by contradiction, and we wish to apply Lemma \ref{Whiteheadattaching1} to prove the Theorem. The space $X=\Omega S^{n+1}$ satisfies conditions of Lemma \ref{Whiteheadattaching1}. Suppose $tn\not\in\{1,3,7\}$ and $f_*\neq 0$. Let $i:S^{tn}\to X^{2tn-1}_{tn}$ denote the inclusion of the bottom cell, $\alpha:S^{2tn-1}\to X^{2tn-1}$ the attaching map of the $2tn$-cell, and $p:X^{2tn-1}\to X^{2tn-1}_{tn}$ the pinching map which collapses $X^{tn-1}$. Then, by Lemma \ref{Whiteheadattaching1} we have $$i_\#[\iota_{tn},\iota_{tn}]=p\circ\alpha.$$
Moreover, for dimensional reasons, the restriction of $f$ to the $(2tn-1)$-cell of $X$, say $f|_{2tn-1}:X^{2tn-1}\to S^{tn}$ extends to a map $g:X^{2tn-1}_{tn}\to S^{tn}$ so that $gp=f|_{2tn-1}$. These data yield a commutative diagram as following.
$$\xymatrix{S^{2tn-1}\ar[r]^-{\alpha}\ar[d]_-{[\iota_{tn},\iota_{tn}]}& X^{2tn-1}\ar[d]^-p\ar[r]^-{f|_{2tn-1}} & S^{tn} \\
            S^{tn}\ar[r]^-i                                    & X^{2tn-1}_{tn}\ar[ru]_-g.}$$
On the other hand $f|_{2tn-1}$ has to extend over the $2tn$-skeleton of $X$ to a map $f|_{2tn}:X^{2tn-1}_{tn}\to S^{tn}$ where the necessity being implied by the existence of $f:X\to S^{tn}$. The existence of $f|_{2tn}$ is the same as saying that $f|_{2tn-1}\circ \alpha=0$. On the other hand $i$ and $g$ are both nonzero in $H_{tn}(-;\Z/2)$ and working at the prime $2$ this implies that we may take $g\circ i$ is the identity (over $\Z$ this implies that $g\circ i$ is an odd multiple of the identity). It follows that
$$[\iota_{tn},\iota_{tn}]=g_\#\circ i_\#[\iota_{tn},\iota_{tn}]=f|_{2tn-1}\circ \alpha=0.$$
It is known that for $m$ odd the Whitehead product $[\iota_m,\iota_m]\in\pi_{2m-1}S^m$ vanishes if and only if there is a Hopf invariant one element in $\pi_{2m-1}S^m$
\cite[Corollary 1.3, Remark 1.4]{Cohen-acourse} which together with Adams Hopf invariant one result means that $m\in\{1,3,7\}$ \cite{Adams-Hopfinv}. This shows that if $f$ extends over the $2tn$-skeleton then $tn\in\{1,3,7\}$. This gives the desired contradiction. Hence, $f_*=0$.
\end{proof}

It remains to decide about the cases with $tn\in\{1,3,7\}$ with $t$ odd and $n+1$ even with $n>0$. However, note that we have required $t>1$, so $t\geqslant 3$. Also, $n$ has to be odd. This leave us with the cases where $n=1$ and $t=3,7$. That is we have to decide about the existence of maps $\Omega S^2\to S^t$ with $t\in\{3,7\}$ such that $f_*\neq 0$. We have the following existence result and we note that its proof is also valid for the case of $\Omega S^2\to S^1$.

\begin{thm}\label{existenceHopf1}
For $t\in\{1,3,7\}$ there exists a map $f:\Omega S^2\to S^t$ with $f_*\neq 0$.
\end{thm}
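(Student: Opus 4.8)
The plan is to produce $f$ explicitly as a composite of a James--Hopf map with the retraction supplied by Lemma~\ref{AdamsHopf}, thereby routing $\Omega S^2$ through $\Omega S^{t+1}$, the loop space to which that lemma applies. Throughout I work mod $2$ (we are localised at $2$), so $H_*(\Omega S^2;\Z/2)=\Z/2[x_1]$ with $|x_1|=1$ and $H_t(\Omega S^2;\Z/2)=\Z/2\{x_1^t\}$, where $x_1^t$ is the image of the top class of the James filtration piece $J_t(S^1)/J_{t-1}(S^1)\simeq S^t$. The target of $f_*$ is the fundamental class $\iota_t\in H_t(S^t;\Z/2)$.

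First I would introduce, for every $t$, the $t$-th James--Hopf map $H_t:\Omega S^2=\Omega\Sigma S^1\to\Omega\Sigma(S^1)^{\wedge t}=\Omega S^{t+1}$ coming from the combinatorial model $x_1\cdots x_n\mapsto \prod_{i_1<\cdots<i_t}x_{i_1}\wedge\cdots\wedge x_{i_t}$ on the reduced product $J(S^1)$. The key homological input is that $(H_t)_*(x_1^t)=y_t$, where $y_t$ is the bottom class of $H_*(\Omega S^{t+1};\Z/2)=\Z/2[y_t]$. Indeed $H_t(\Omega S^{t+1};\Z/2)=\Z/2\{y_t\}$, so $(H_t)_*(x_1^t)$ is either $0$ or $y_t$; the combinatorial formula realises it as $y_t$, since the word of length $t$ representing $x_1^t$ has a unique length-$t$ subword, contributing the single term $x_1\wedge\cdots\wedge x_1$, i.e.\ the bottom cell of $J(S^t)=\Omega S^{t+1}$. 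For $t=1$ the map $H_1$ is the identity, which will recover the direct construction.

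Second, for $t\in\{1,3,7\}$ I would invoke Lemma~\ref{AdamsHopf} with $n=t$: there is a map $\partial:\Omega S^{t+1}\to S^t$ with $\partial_*\neq0$, arising as the projection onto the first factor of the splitting $\Omega S^{t+1}\simeq S^t\times\Omega S^{2t+1}$. Since $\partial$ restricts to a degree-one map on the bottom cell $S^t$, we get $\partial_*(y_t)=\iota_t$. Setting $f=\partial\circ H_t:\Omega S^2\to\Omega S^{t+1}\to S^t$, I would then compute $f_*(x_1^t)=\partial_*\big((H_t)_*(x_1^t)\big)=\partial_*(y_t)=\iota_t\neq0$, so $f_*\neq0$, as required (and for $t=1$ this is just $f=\partial:\Omega S^2\to S^1$).

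The step I expect to require the most care is the identity $(H_t)_*(x_1^t)=y_t$, i.e.\ ensuring the James--Hopf map does not annihilate the relevant class; this is forced by dimension together with the combinatorial model, and it is precisely the positive counterpart of the obstruction studied in Theorem~\ref{elimination-Whitehead1}, where the vanishing of the Whitehead product $[\iota_t,\iota_t]$ for $t\in\{1,3,7\}$ is exactly what guarantees the retraction $\partial$ exists. An alternative, entirely in the spirit of Lemma~\ref{Whiteheadattaching1}, would be to build $f$ skeletonwise on $\Omega S^2/(\Omega S^2)^{t-1}$ by retracting onto the bottom cell $S^t$; the only nontrivial obstruction lives on the $2t$-cell and equals $[\iota_t,\iota_t]$, which vanishes for $t\in\{1,3,7\}$, but verifying that all higher obstructions also vanish is less transparent than the James--Hopf route, so I would present the composite construction as the main argument.
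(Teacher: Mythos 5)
Your proof is correct, and it shares the paper's global skeleton --- factor $f$ through $\Omega S^{t+1}$ and compose with a retraction onto $S^t$ whose existence is tied to Hopf invariant one --- but the implementation is genuinely different. The paper never uses James--Hopf invariants: it builds the intermediate map $g:\Omega S^2\to\Omega S^{t+1}$ abstractly, via the isomorphism $[\Omega S^2,\Omega S^{t+1}]\simeq[\Sigma\Omega S^2,S^{t+1}]\simeq\bigoplus_{k\geqslant 1}\pi_{k+1}S^{t+1}$ coming from adjointness and the James splitting $\Sigma\Omega S^2\simeq\bigvee_{k\geqslant 1}\Sigma S^k$, choosing the component on the $S^{t+1}$ summand to be the identity; it then deduces $H_t(\partial g)\neq 0$ because the boundary map $\partial$ in the extended Hopf fibration sequence $\Omega S^{2t+1}\to\Omega S^{t+1}\stackrel{\partial}{\to}S^t$ is an equivalence through dimension $2t-1$ (the paper's additional non-lifting argument is logically superfluous once homological nontriviality is in hand). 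You instead take $g$ to be the combinatorial James--Hopf map $H_t$ and verify nontriviality by the identity $(H_t)_*(x_1^t)=y_t$, which is indeed valid: the restriction of the combinatorial $H_t$ to the James filtration piece $J_t(S^1)$ is precisely the pinch map $J_t(S^1)\to J_t(S^1)/J_{t-1}(S^1)=S^t\subset J(S^t)$, so the class $x_1^t$ maps to the bottom class; and your retraction, supplied by Lemma \ref{AdamsHopf}, satisfies $\partial_*(y_t)=\iota_t$ since $\partial\circ i\simeq\mathrm{id}$ at the prime $2$. In fact your map is a particular instance of the paper's construction: under the displayed isomorphism, $H_t$ has $t$-th component $\pm\,\mathrm{id}_{S^{t+1}}$, so the two agree up to the choices the paper leaves free. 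What your route buys is economy --- no adjointness bookkeeping and the homology computation is immediate from the combinatorial model; what the paper's route buys is an explicitly prime-independent statement (it notes the construction works integrally by taking $g_3$ to be the identity), whereas your argument as written is mod $2$ --- though it upgrades as well, since the pinch map $J_t(S^1)\to S^t$ has degree $\pm 1$ integrally. Your closing remark about the skeletonwise alternative is rightly flagged as incomplete (higher obstructions are not controlled there), and you correctly do not rely on it.
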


\begin{proof}
For the given values of $t$, the existence of Hopf invariant one elements in $\pi_t^s$ is equivalent to existence of a decomposition $\Omega S^{t+1}\simeq \Omega S^{2t+1}\times S^t$. The examples of Hopf invariant one elements, which exist integrally and before any localisation, is provided by the Hopf fibre bundle $S^t\to S^{2t+1}\to S^{t+1}$ which could be extended to a fibre sequence
$$\Omega S^{2t+1}\to \Omega S^{t+1}\stackrel{\partial}{\to} S^t\to S^{2t+1}\to S^{t+1}.$$
We outline the proof for the case of $t=3$ and the case of $t=1,7$ are similar. In this case consider following portion of the above sequence
$$\Omega S^7\stackrel{\Omega\nu}{\to}\Omega S^4\stackrel{\partial}{\to}S^3.$$
We choose the more functorial, although less usual, notation $H_k(g)$ to denote $H_k(g;a)$ which is $g_*$ on the $k$-th homology with coefficients in a PID $a$ with an interest in $a=\Z,\Z/2$. We construct a map $g:\Omega S^2\to \Omega S^4$ with the property that $H_3(g)\neq 0$ and $H_3(\partial g)\neq 0$. It then would suffice to define $f=\partial g$. \\

By James splitting $\Sigma\Omega S^2\simeq\bigvee_{k=1}^{+\infty} \Sigma S^k$ there is an isomorphism
$$[\Omega S^2,\Omega S^4]\simeq [\Sigma\Omega S^2,S^4]\simeq [\bigvee_{k=1}^{+\infty} \Sigma S^k,S^4]\simeq\bigoplus_{k=1}^{+\infty} [S^{k+1},S^4]\simeq\bigoplus_{k=1}^{+\infty}\pi_{k+1}S^4.$$
For $g:\Omega S^2\to \Omega S^4$, write $\mathrm{ad}(g):\Sigma \Omega S^k\to S^4$ for its adjoint and $g_k:S^{k+1}\to S^4$ for the restriction of $\mathrm{ad}(g)$ to the $\Sigma S^k$ summand of $\Sigma\Omega S^2$. Consider the commutative diagram
$$\xymatrix{
[\Omega S^2,\Omega S^4]\ar[r]^-{\simeq}\ar[d]_-{H_3(-)} & [\Sigma\Omega S^2,S^4]\ar[d]^-{H_4(-)}\\
H_3\Omega S^4\ar[r]^-{\simeq}                           & H_4S^4}
$$
by which we may write
$$H_3(g)=H_4(\mathrm{ad}(g))=\sum_{k=1}^{+\infty}H_4(g_k)=H_4(g_3)$$
as for dimensional reasons $H_4(g_k)=0$ if $k\neq 3$. Now, to construct $g$ with $H_3(g)\neq 0$, choose $g_3:S^{3+1}\to S^4$ to be any nonzero element $d\in\pi_{4}S^4\simeq\Z$. Choose $g_k$ for $k\neq 3$ to be any arbitrary element. It follows that $H_3(g)=d$. If we wish to provide a proof which is valid at any prime then choose $g_3$ to be the mapping corresponding to the identity $S^4\to S^4$ which yields $H_3(g)=1$. Note that $g$ is not in the image of
$$(\Omega \nu)_\#:[\Omega S^2,\Omega S^7]\to[\Omega S^2,\Omega S^4].$$
For if $g=(\Omega\nu)_\# g'$ for some $g':\Omega S^2\to\Omega S^7$ then $H_3(g')\neq 0$. However, this is impossible for dimensional reasons as $H_3(\Omega S^7)\simeq 0$. Therefore, $g$ does not lift to $\Omega S^7$ and $f=\partial g:\Omega S^2\to S^3$ is a nontrivial element. On the other hand the mapping $\partial$ is a $5$-equivalence whose inverse is provided by the inclusion $S^3\to\Omega S^4$. Therefore, $H_3(f)\neq 0$. This completes the proof.
\end{proof}

\section{Notes on the case with $l>1$ and $m>n$}
We can use some geometry to prove -

\begin{lmm}
For any $l>1$ and $f:\Omega^l S^{n+l}\to S^{2n}$ we have $f_*=0$.
\end{lmm}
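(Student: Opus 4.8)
The plan is to study the single cohomology class $z:=f^*(x_{2n})\in H^{2n}(\Omega^lS^{n+l})$, where $x_{2n}$ generates $H^{2n}S^{2n}$. Since $S^{2n}$ has no cohomology above dimension $2n$, the class $x_{2n}$ is killed by every degree–raising operation; hence by naturality $z$ must satisfy $\mathrm{Sq}^iz=0$ for all $i>0$ and also $z\smile z=f^*(x_{2n}^2)=0$. First I would record that for $n\geqslant 1$ and $l\geqslant 2$ the group $H_{2n}(\Omega^lS^{n+l};\F_2)$ is one–dimensional, spanned by the Pontryagin square $\iota_n^2=Q^n\iota_n$ of the bottom class $\iota_n$; thus $f_*\neq 0$ mod $2$ is equivalent to $\langle z,\iota_n^2\rangle\neq 0$. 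The whole argument then reduces to exhibiting, from the internal structure of $\Omega^lS^{n+l}$, an operation that does \emph{not} annihilate $z$. The one new ingredient beyond the $l=1$ analysis of Theorem \ref{main-nonexistence1} is that, $l$ being at least $2$, the space is a double loop space, so its mod $2$ homology carries the Dyer–Lashof operation $Q^{n+1}$ and its integral Pontryagin ring is graded commutative.

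For $n$ odd I would argue mod $2$. As $l\geqslant 2$, the class $Q^{n+1}\iota_n\in H_{2n+1}(\Omega^lS^{n+l};\F_2)$ is defined and nonzero, and the Nishida relation gives $\mathrm{Sq}^1_*Q^{n+1}\iota_n=\binom n1Q^n\iota_n=n\,\iota_n^2=\iota_n^2$, using that the spherical bottom class $\iota_n$ is annihilated by the lower operations $\mathrm{Sq}^i_*$, $i>0$. Dualising, $\langle\mathrm{Sq}^1z,Q^{n+1}\iota_n\rangle=\langle z,\iota_n^2\rangle\neq0$, so $\mathrm{Sq}^1z\neq0$, contradicting $\mathrm{Sq}^1z=f^*(\mathrm{Sq}^1x_{2n})=0$. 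Hence $f_*=0$ mod $2$; the integral statement follows because graded commutativity forces the odd–degree square $\iota_n^2$ to be $2$–torsion, so its image in $H_{2n}(S^{2n};\Z)=\Z$ vanishes.

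For $n$ even the mod $2$ route collapses, and I would pass to integral coefficients. Now $\iota_n$ has even degree, so $\iota_n^2$ and $\iota_n^4$ are of infinite order (nonzero rationally, $\iota_n$ being an even–degree polynomial generator of $H_*(\Omega^lS^{n+l};\Q)$). Writing $\langle z,\iota_n^2\rangle=d$, an odd and hence nonzero integer, the integral coproduct $\Delta_*\iota_n^4=(\iota_n\otimes1+1\otimes\iota_n)^4$ contains the summand $\binom42\,\iota_n^2\otimes\iota_n^2=6\,\iota_n^2\otimes\iota_n^2$, while every other summand pairs trivially with $z\otimes z$ for degree and unit reasons. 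Therefore $\langle z\smile z,\iota_n^4\rangle=6d^2\neq0$, so $z\smile z\neq0$ integrally, contradicting $z\smile z=f^*(x_{2n}^2)=0$. This gives $f_*=0$ integrally, and reducing mod $2$ recovers the $\F_2$ statement since for $n$ even the mod $2$ class $\iota_n^2$ is the reduction of the integral one.

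The main obstacle is precisely this parity split: when $n$ is even all the cheap mod $2$ obstructions vanish identically — the Nishida coefficient $\binom n1$ is even, and $z\smile z=\mathrm{Sq}^{2n}z=0$ is automatic over $\F_2$ for a pulled–back top class — so the nonvanishing must be detected \emph{integrally}. The step I expect to have to pin down carefully is therefore the even case: that $\iota_n^4$ is genuinely nonzero in integral homology, that the binomial coefficient $\binom42=6$ really is the relevant (nonzero) constant, and that no other term of $\Delta_*\iota_n^4$ contaminates the pairing against $z\smile z$.
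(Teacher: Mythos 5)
Your argument is essentially correct, but it is a genuinely different proof from the one in the paper. The paper disposes of this lemma in two lines: the inclusion $\Omega S^{n+1}\to\Omega^l S^{n+l}$ is a $2n$-equivalence, hence surjective on $H_{2n}$, so any $f$ with $f_*\neq 0$ would restrict to a map $\Omega S^{n+1}\to S^{2n}$ nontrivial in homology, contradicting Theorem \ref{main-nonexistence1} with $t=2$ (which, $t$ being even, covers both parities of $n+1$ through parts (i) and (ii)). You never make this reduction; instead you argue intrinsically on the iterated loop space, and your two cases divide the labour in an interesting way: for $n$ odd you use structure that exists only when $l\geqslant 2$, namely the Dyer--Lashof class $Q^{n+1}\iota_n$ and the Nishida relation $\mathrm{Sq}^1_*Q^{n+1}\iota_n=n\,Q^n\iota_n=\iota_n^2$, to contradict $\mathrm{Sq}^1 f^*(x_{2n})=0$; for $n$ even your coproduct computation $\langle z\smile z,\iota_n^4\rangle=\binom{4}{2}\langle z,\iota_n^2\rangle^2$ against $z\smile z=f^*(x_{2n}^2)=0$ is exactly the divided-power mechanism that the paper runs inside $H^*(\Omega S^{n+1};\Z)\simeq\Gamma_{\Z}(x_n)$ when proving Theorem \ref{main-nonexistence1}, transplanted to $\Omega^lS^{n+l}$ via its Hopf algebra structure. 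The paper's route buys brevity and inherits both the $\Z$- and $\Z/2$-statements from the quoted theorem; yours is self-contained (independent of Theorem \ref{main-nonexistence1} and of the Freudenthal-type reduction) and, in the odd case, visibly exploits the hypothesis $l>1$. Two details you should nail down to make your version complete: (a) your arguments kill $f_*$ only on the class $\iota_n^2$, so to conclude $f_*=0$ on all of $H_{2n}(-;\Z)$ you must also note that $H_{2n}(\Omega^lS^{n+l};\Z)$ is finite when $n$ is odd, and has rank one with free part detected rationally by $\iota_n^2$ when $n$ is even --- both follow from the rational homotopy of spheres, since the second rational class of $\Omega^lS^{n+l}$ sits in degree $2n+l-1>2n$; (b) the nonvanishing of $Q^{n+1}\iota_n$ in $H_{2n+1}(\Omega^lS^{n+l};\F_2)$, on which your pairing argument hinges, should be cited from the Cohen--Lada--May description of $H_*(\Omega^l\Sigma^lS^n;\F_2)$, where it is a polynomial generator.
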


\begin{proof}
The inclusion $i:\Omega S^{n+1}\to \Omega^lS^{n+l}$ is a $2n$-equivalence. Hence, assuming $f:\Omega^2S^{n+2}\to S^{2n}$ is nontrivial in homology, $g:\Omega S^{n+1}\stackrel{i}{\to}\Omega^lS^{n+l}\stackrel{f}{\to}S^{2n}$ satisfies $g_*\neq 0$. This contradicts Theorem \ref{main-nonexistence1}.
\end{proof}

We postpone further investigation on this to a future work.


\bibliographystyle{plain}

\end{document}